\DeclareMathOperator\Aut{Aut}%
\DeclareMathOperator\Char{Char}%
\DeclareMathOperator\id{id}%
\DeclareMathOperator\GL{GL}%
\DeclareMathOperator{\GammaL}{\Gamma L}
\DeclareMathOperator\PGL{PGL}%
\DeclareMathOperator\PGammaL{P\Gamma L}%
\DeclareMathOperator\tr{tr}
\newcommand{\li}{\langle}
\newcommand{\re}{\rangle}
\newcommand{\lire}{\li\,\cdot\,,\cdot\,\re}
\newcommand{\quer}{\overline{(\cdot)}}
\newcommand{\ol}[1]{\overline{#1}}
\newcommand{\x}{\times} 
\newcommand{\rip}{\mathrel{\parallel_{r}}}
\newcommand{\lep}{\mathrel{\parallel_{\ell}}}
\newcommand{\riS}{\cS_{r}}
\newcommand{\leS}{\cS_{\ell}}
\newcommand{\Autle}{\ensuremath{\Gamma_{\ell}}}
\newcommand{\Autri}{\ensuremath{\Gamma_{r}}}
\newcommand{\Autleri}{\Autle}
\newcommand{\Autp}{\ensuremath{\Gamma_\parallel}}
\newcommand{\translations}{\ensuremath{\GL(H_H)}}
\newcommand{\inner}[1]{\ensuremath{\widetilde{#1}}}
\newcommand{\bPH}{\bP(H_F)} 
\newcommand{\dspH}{\bigl(\bPH,{\lep},{\rip}\bigr)}
\newcommand{\cLH}{\cL(H_F)} 
\newcommand{\overbracket}[1]{\ensuremath{\overset{\hbox{\rotatebox[origin=c]{-90}{[}}}{#1}}}
\renewcommand{\phi}{\varphi}
\newcommand{\cA}{{\mathcal A}} 
\newcommand{\cD}{{\mathcal D}}
\newcommand{\cE}{{\mathcal E}}
\newcommand{\cF}{{\mathcal F}}
\newcommand{\cL}{{\mathcal L}}
\newcommand{\cS}{{\mathcal S}}
\newcommand{\bF}{{\mathbb F}}
\newcommand{\bN}{{\mathbb N}}
\newcommand{\bP}{{\mathbb P}}
\newcommand{\bQ}{{\mathbb Q}}
\newtheorem{thm}{Theorem}[section]
\newtheorem{prop}[thm]{Proposition}
\newtheorem{cor}[thm]{Corollary}
\newtheorem{lem}[thm]{Lemma}
\theoremstyle{definition}
\theoremstyle{remark}
\newtheorem{rem}[thm]{Remark}
\newtheorem{exa}[thm]{Example}
\begin{document}

\author{Hans Havlicek \and Stefano Pasotti\thanks{This work was partially supported by GNSAGA of INdAM (Italy)} \and Silvia Pianta\footnotemark[1]\,\,\thanks{This work was partially supported by
Universit\`{a} Cattolica del Sacro Cuore (Milano, Italy) in the framework of Call D.1 2018}}
\title{Automorphisms of a Clifford-like parallelism}
\date{}

\maketitle

\begin{abstract}
In this paper we focus on the description of the automorphism group $\Autp$ of a Clifford-like parallelism $\parallel$ on a $3$-dimensional projective
double space $\dspH$ over a quaternion skew field $H$ (with centre a field $F$ of any characteristic).
We compare $\Autp$ with the automorphism group $\Autle$ of
the left parallelism $\lep$, which is strictly related to $\Aut(H)$. We build
up and discuss several examples showing that over certain quaternion skew
fields it is possible to choose $\parallel$ in such a way that $\Autp$ is
either properly contained in $\Autle$ or coincides with $\Autle$ even though
${\parallel}\neq{\lep}$.
\par~\par\noindent
\textbf{Mathematics Subject Classification (2010):} 51A15, 51J15 \\
\textbf{Key words:} Clifford parallelism, Clifford-like parallelism, projective
double space, quaternion skew field, automorphism
\end{abstract}

\section{Introduction}

As a far-reaching generalisation of the situation in $3$-dimensional real
elliptic geometry, H.~Karzel, H.-J.~Kroll and K.~S\"{o}rensen coined the notion of
a \emph{projective double space}, that is, a projective space $\bP$ together
with a \emph{left parallelism} $\lep$ and a \emph{right parallelism} $\rip$ on
the line set of $\bP$ such that---loosely speaking---all ``mixed
parallelograms'' are closed \cite{kks-73}, \cite{kks-74}. It is common to
address the given parallelisms as the \emph{Clifford parallelisms} of the
projective double space. We shall not be concerned with the particular case
where ${\lep} = {\rip}$, which can only happen over a ground field of
characteristic two. All other projective double spaces are three-dimensional
and they can be obtained algebraically in terms of a quaternion skew field $H$
with centre $F$ by considering the projective space $\bP(H_F)$ on the vector
space $H$ over the field $F$ and defining $\lep$ and $\rip$ via left and
right multiplication in $H$. (See \cite{blunck+p+p-10a}, \cite{havl-15},
\cite{havl-16a}, \cite[pp.~75--76]{karz+k-88} and the references given there.)
In their work \cite{blunck+p+p-10a} about generalisations of Clifford
parallelism, A.~Blunck, S.~Pianta and S.~Pasotti pointed out that a projective
double space $\dspH$ may be equipped in a natural way with so-called
\emph{Clifford-like} parallelisms, namely parallelisms for which each equivalence class
is either a class of left parallel lines or a class of right
parallel lines. The exposition of this topic in \cite{havl+p+p-19a} serves as
major basis for this article.
\par
Our main objective is to describe the group of all collineations that preserve
a given Clifford-like parallelism $\parallel$ of a projective double space
$\dspH$. Since we work most of the time in terms of vector spaces, we shall
consider instead the underlying group $\Autp$ of all $\parallel$-preserving
semilinear transformations of the vector space $H_F$, which we call the
\emph{automorphism group} of the given parallelism. In a first step we focus on
the \emph{linear automorphisms} of $\parallel$. We establish in
Theorem~\ref{thm:cl-aut-lin} that the group of all these linear automorphism
does not depend on the choice of $\parallel$ among all Clifford-like
parallelisms of $\dspH$. Since $\lep$ and $\rip$ are also Clifford-like, it is
impossible to characterise Clifford parallelism in terms of its linear
automorphism group in our general setting of an arbitrary quaternion skew
field. On the other hand, there are projective double spaces in which there are
no Clifford-like parallelisms other than its Clifford parallelisms. This
happens, for instance, if $H$ is chosen to be the skew field of Hamilton's
quaternions over the real numbers. (It is worth noting that D.~Betten, R.~L\"{o}wen
and R.~Riesinger characterised Clifford parallelism among the topological
parallelisms of the $3$-dimensional real projective space by its (linear)
automorphism group in \cite{bett+l-17a}, \cite{bett+r-14a}, \cite{loew-17y},
\cite{loew-18z}, \cite{loew-17z}.) The next step is to consider the (full)
automorphism group $\Autp$. Here the situation is more intricate, since in
general the group depends on the underlying quaternion skew field as
well as the choice of $\parallel$. We know from previous work of S.~Pianta and
E.~Zizioli (see \cite{pian-87b} and \cite{pz90-coll}) that the left and right
Clifford parallelism of $\dspH$ share the same automorphism group, say
$\Autleri$. According to Corollary~\ref{cor:aut-not-ex}, $\Autleri$ cannot be a
proper subgroup of $\Autp$. In Section~\ref{sect:exa}, we construct a series of
examples showing that over certain quaternion skew fields it is possible to
choose $\parallel$ in such a way that $\Autp$ is either properly contained in
$\Autle$ or coincides with $\Autle$ even though ${\parallel}\neq{\lep,\rip}$.
\par
One open problem remains: Is there a projective double space $\dspH$ that
admits a Clifford-like parallelism $\parallel$ for which none of the groups
$\Autp$ and $\Autle$ is contained in the other one?

\section{Basic notions and results}\label{sect:basics}

Let $\bP$ be a projective space with line set $\cL$. We recall that a
\emph{parallelism} on $\bP$ is an equivalence relation on $\cL$ such that each
point of $\bP$ is incident with precisely one line from each equivalence class.
We usually denote a parallelism by the symbol $\parallel$. For each line
$M\in\cL$ we then write $\cS(M)$ for the equivalence class of $M$, which is
also addressed as the \emph{parallel class} of $M$. Any such parallel class is
a spread (of lines) of $\bP$, that is, a partition of the point set of $\bP$ by
lines. When dealing with several parallelisms at the same time we add some
subscript or superscript to the symbols $\parallel$ and $\cS$. The seminal book
\cite{john-10a} covers the literature about parallelisms up to the year 2010.
For the state of the art, various applications, connections with other areas of
geometry and historical remarks, we refer also to \cite{betta-16a},
\cite{bett+r-12a}, \cite{cogl-15a}, \cite{havl+r-17a}, \cite{karz+k-88},
\cite{loew-18z}, \cite{topa+z-18z} and the references therein.

\par
The following simple observation, which seems to be part of the folklore, will
be useful.
\par
\begin{lem}\label{lem:invar}
Let\/ $\bP$ and\/ $\bP'$ be projective spaces with
parallelisms\/ $\parallel$ and $\parallel'$, respectively. Suppose that
$\kappa$ is a collineation of\/ $\bP$ to\/ $\bP'$ such that any two $\parallel$-parallel
lines go over to $\parallel'$-parallel lines. Then $\kappa$ takes any\/ $\parallel$-class to a\/
$\parallel'$-class.
\end{lem}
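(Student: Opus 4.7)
The plan is to leverage the fact that both a $\parallel$-class and a $\parallel'$-class are spreads, so the preservation of pairwise parallelism lifts to the preservation of whole classes via a simple covering argument.

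First I would fix a line $M\in\cL$ and show the inclusion $\kappa\bigl(\cS(M)\bigr)\subseteq \cS'\bigl(\kappa(M)\bigr)$. This is immediate from the hypothesis: any $L\in\cS(M)$ is $\parallel$-parallel to $M$, hence $\kappa(L)$ is $\parallel'$-parallel to $\kappa(M)$, which means $\kappa(L)\in\cS'(\kappa(M))$.

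Next I would promote this inclusion to an equality by using the spread property. Since $\cS(M)$ partitions the point set of $\bP$ and $\kappa$ is a bijection on points (being a collineation), the image $\kappa(\cS(M))$ partitions the point set of $\bP'$. On the other hand, $\cS'(\kappa(M))$ is itself a partition of the point set of $\bP'$. To prove equality, pick any line $L'\in\cS'(\kappa(M))$ and any point $P'\in L'$. Because $\kappa(\cS(M))$ covers $\bP'$, there is a (unique) line $L\in\cS(M)$ with $P'\in\kappa(L)$; by the inclusion from the previous step, $\kappa(L)$ belongs to $\cS'(\kappa(M))$. But $\cS'(\kappa(M))$ is a partition, so the line through $P'$ in this class is unique, forcing $\kappa(L)=L'$. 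Hence $L'\in\kappa(\cS(M))$, giving $\cS'(\kappa(M))\subseteq \kappa(\cS(M))$.

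There is no real obstacle here; the only point requiring a moment's care is distinguishing \emph{containment in a spread} from \emph{being the whole spread}, and that is precisely where the partition property of parallel classes is used. The statement is indeed folklore, but spelling out the partition argument makes it entirely self-contained.
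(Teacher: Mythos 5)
Your proof is correct and follows essentially the same route as the paper: establish the inclusion $\kappa\bigl(\cS(M)\bigr)\subseteq\cS'\bigl(\kappa(M)\bigr)$ from the hypothesis, then use the spread (partition) property of both classes to upgrade it to equality. The paper compresses your covering argument into the single remark that a proper subset of a spread cannot be a spread; your version merely spells that out.
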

\begin{proof} In $\bP'$, the $\kappa$-image of any $\parallel$-class is a
spread that is contained in a spread, namely some $\parallel'$-class. Any
proper subset of a spread fails to be a spread, whence the assertion follows.
\end{proof}
Let $H$ be a quaternion skew field with centre $F$; see, for example,
\cite[pp.~103--105]{draxl-83} or \cite[pp.~46--48]{tits+w-02a}. If $E$ is a
subfield of $H$ then $H$ is a left vector space and a right vector space over
$E$. These spaces are written as ${}_E H$ and $H_E$, respectively. We do not
distinguish between $_E H$ and $H_E$ whenever $E\subseteq F$. Given any $x\in H$ we denote by $\ol x$ the
\emph{conjugate quaternion} of $x$. Then $x=\ol x$ holds precisely when $x\in
F$. We write $\tr(x)=x+\ol x\in F$ for the \emph{trace} of $x$ and $N(x)=\ol x
x= x\ol x\in F$ for the \emph{norm} of $x$. We have the identity
\begin{equation}\label{eq:x1}
    x^2-\tr(x)x+N(x) = 0 .
\end{equation}
In $H_F$, the symmetric bilinear form associated to the quadratic form $N\colon
H\to F$ is
\begin{equation}\label{eq:<,>}
    \lire\colon H\x H\to F\colon (x,y)\mapsto \li x,y\re
    =\tr(x\ol{y})=x\ol{y}+y\ol{x} .
\end{equation}
\par
Let $\alpha$ be an automorphism of the quaternion skew field $H$. Then
$\alpha(F)=F$ and so $\alpha$ is a semilinear transformation of the
vector space $H_{F}$ with $\alpha_{|F}\colon F\to F$ being its accompanying
automorphism. Furthermore,
\begin{equation}\label{eq:tr+N}
    \forall\, x\in H\colon \tr\bigl(\alpha(x)\bigr)=\alpha\bigl(\tr(x)\bigr),\;
        N\bigl(\alpha(x)\bigr)=\alpha\bigl(N(x)\bigr), \;
        \ol{\alpha(x)}=\alpha(\ol{x}) .
\end{equation}
This is immediate for all $x\in F$, since here $\tr(x)=2x$, $N(x)=x^2$, and
$\ol x = x$. For all $x\in H\setminus F$ the equations in \eqref{eq:tr+N}
follow by applying $\alpha$ to \eqref{eq:x1} and by taking into account that
$\alpha(x^2)=\alpha(x)^2$ can be written in a unique way as an $F$-linear
combination of $\alpha(x)$ and $1$.
\par
The \emph{projective space} $\bPH$ is understood to be the set of all subspaces
of $H_F$ with \emph{incidence} being symmetrised inclusion. We adopt the usual
geometric terms: \emph{points}, \emph{lines} and \emph{planes} are the
subspaces of $H_F$ with vector dimension one, two, and three, respectively. We
write $\cLH$ for the line set of $\bPH$. The \emph{left parallelism} $\lep$ on
$\cLH$ is defined by letting $M_1\lep M_2$ precisely when there is a $g\in
H^*:=H\setminus\{0\}$ with $gM_1=M_2$. The \emph{right parallelism} $\rip$ is
defined in the same fashion via $M_1g=M_2$. Then $\dspH$ is a \emph{projective
double space} with $\lep$ and $\rip$ being its \emph{Clifford parallelisms}
(see \cite{kks-73}, \cite{kks-74}, \cite[pp.~75--76]{karz+k-88}).

A parallelism $\parallel$ of $\dspH$ is \emph{Clifford-like}, if each
$\parallel$-class is a left or a right parallel class
(see Def.~3.2. of \cite{havl+p+p-19a} where
 the construction of Clifford-like parallelisms appears frequently
in the more general framework of ``blending''; this point of view will be
disregarded here). Any Clifford-like parallelism $\parallel$ of $\dspH$ admits
the following explicit description:

\begin{thm}[{see \cite[Thm.~4.10]{havl+p+p-19a}}]\label{thm:cliff-like}
In $\dspH$, let $\cA(H_F)\subset\cLH$ denote the star of lines with centre
$F1$, let $\cF$ be any subset of $\cA(H_F)$, and define a relation $\parallel$ on $\cL(H_F)$
by taking the left parallel classes of all lines in $\cal F$ and the right
parallel classes of all lines in $\cA(H_F)\setminus\cF$.
This will be an equivalence relation (and hence, a parallelism) if, and only
if, the defining set $\cal F$ is invariant under the inner automorphisms of
$H$.
\end{thm}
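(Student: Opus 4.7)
The plan is to reformulate ``$\parallel$ is an equivalence relation'' as a partition property of the proposed family of classes, and then translate that property into an invariance condition on $\cF$.

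First, every left parallel class and every right parallel class contains a unique line through the point $F1$, giving well-defined representative maps $M_\ell, M_r\colon \cLH \to \cA(H_F)$ such that each $L \in \cLH$ lies in the left class $\leS(M_\ell(L))$ and in the right class $\riS(M_r(L))$, and in no other left or right class. Hence the proposed family
\[
\cC := \{\leS(M) : M \in \cF\} \cup \{\riS(M) : M \in \cA(H_F)\setminus\cF\}
\]
partitions $\cLH$ (which is what is needed for $\parallel$ to be an equivalence relation) precisely when each $L\in\cLH$ belongs to exactly one member of $\cC$. Two distinct left classes are automatically disjoint, and likewise for right classes; one checks further that a left class $\leS(M_1)$ with $M_1\in\cF$ and a right class $\riS(M_2)$ with $M_2\notin\cF$ cannot coincide as subsets of $\cLH$ (else $M_1 \rip M_2$ and hence $M_1=M_2$). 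So both possible failures---$L$ lying in two distinct members, or in none---reduce, by a short truth-table calculation, to the single biconditional
\[
M_\ell(L) \in \cF \iff M_r(L) \in \cF \qquad\text{for every } L\in\cLH.
\]

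Next comes the key computation. Given $M\in\cA(H_F)$ and $g\in H^*$, set $L := gM$. Then $M_\ell(L) = M$, and I claim $M_r(L) = gMg^{-1}$. Indeed $gMg^{-1}$ is a two-dimensional $F$-subspace (conjugation by $g$ is an $F$-linear bijection of $H_F$) containing $F1$ (since $1\in M$ yields $1 = g\cdot 1\cdot g^{-1}\in gMg^{-1}$), so $gMg^{-1}\in\cA(H_F)$; and the identity $(gMg^{-1})g = gM = L$ exhibits $L$ as right-parallel to it, while uniqueness of the right representative through $F1$ completes the identification. As $(g,M)$ varies, $L = gM$ runs through all of $\cLH$, so the biconditional becomes
\[
M \in \cF \iff gMg^{-1}\in\cF \qquad\text{for all } M\in\cA(H_F),\ g\in H^*.
\]
The map $M \mapsto gMg^{-1}$ is precisely the induced action on lines of the inner automorphism $\inner g\colon x\mapsto gxg^{-1}$, which preserves $\cA(H_F)$ because $\inner g(F1) = F1$. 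Hence the condition is exactly the invariance of $\cF$ under all inner automorphisms of $H$. Once $\parallel$ is known to be an equivalence relation, each of its classes is a left or right parallel class, hence a spread of $\bPH$, so $\parallel$ is automatically a parallelism.

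The only delicate point is the identification $M_r(gM) = gMg^{-1}$, which hinges on recognising that conjugation by $g$ converts the left-translate $gM$ into a right-translate of a line through $F1$. Everything else amounts to bookkeeping around the ``representative through $F1$'' description of left and right parallel classes.
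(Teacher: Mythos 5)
Your argument is correct, but a direct comparison is not possible here: the paper does not prove Theorem~\ref{thm:cliff-like} at all, it imports it verbatim from \cite[Thm.~4.10]{havl+p+p-19a}. Judged on its own, your proof is a clean and essentially complete derivation. The two pillars are sound: (i) the reduction of ``equivalence relation'' to ``the designated classes partition $\cLH$'', encoded via the representatives $M_\ell(L),M_r(L)\in\cA(H_F)$, together with the check that a designated left class and a designated right class cannot coincide as sets; and (ii) the key identity $M_r(gM)=gMg^{-1}$ for $M\in\cA(H_F)$, $g\in H^*$, which correctly uses that conjugation by $g$ is $F$-linear (as $F$ is the centre) and that $(gMg^{-1})g=gM$. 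The only spot you gloss is the implication ``equivalence relation $\Rightarrow$ partition'': the ``at most one class'' half of that would require an explicit transitivity argument (two distinct spreads are incomparable, so a line lying in two distinct designated classes breaks transitivity). But your proof does not actually need it: reflexivity alone gives ``every $L$ lies in at least one designated class'', i.e.\ $gMg^{-1}\in\cF\Rightarrow M\in\cF$ for all $M,g$, and substituting $g\mapsto g^{-1}$ upgrades this one-sided condition to full invariance of $\cF$ under $\inner{H^*}$, which in turn forces the exact-partition property. So the logical loop closes; it would be worth one sentence to make that explicit. The final remark that each class, being a left or right parallel class, is a spread, so that the equivalence relation is automatically a parallelism, is exactly right.
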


We note that ---from an algebraic
point of view--- the lines from $\cA(H_F)$ are precisely the maximal subfields
of the quaternion skew field $H$.

\par

Let $\parallel$ be any parallelism on $\bPH$. We denote by $\Autp$ the set of
all mappings from $\GammaL(H_F)$ that act on $\bPH$ as $\parallel$-preserving
collineations. By
Lemma~\ref{lem:invar}, $\Autp$ is a subgroup of $\GammaL(H_F)$ and we shall
call it the \emph{automorphism group} of the parallelism $\parallel$. Even
though we are primarily interested in the group of all $\parallel$-preserving
collineations of $\bPH$, which is a subgroup of $\PGammaL(H_F)$, we investigate
instead the corresponding group $\Autp$. The straightforward task of rephrasing
our findings about $\Autp$ in projective terms is usually left to the reader.
\par
The Clifford parallelisms of the projective double space $\dspH$ give rise to
automorphism groups $\Gamma_{\lep}=:\Autle$ and $\Gamma_{\rip}=:\Autri$. We
recall from \cite[p.~166]{pian-87b} that
\begin{equation}\label{eq:le=ri}
     \Autle = \Autri.
\end{equation}
Equation~\eqref{eq:le=ri} is based on the following noteworthy geometric
result. In $\dspH$, the right (left) parallelism can be defined in terms of
incidence, non-incidence and left (right) parallelism. See, for example,
\cite[pp.~75--76]{karz+k-88} or make use of the (much more general) findings in
\cite[\S6]{herz-77a}, which are partly summarised in \cite{herz-77b} and
\cite{herz-80a}. In order to describe the group $\Autleri$ more explicitly, we
consider several other groups. First, the group of all \emph{left translations}
$\lambda_g\colon H\to H\colon x\mapsto g x$, $g\in H^*$, is precisely the group
$\translations$. The group $\translations$ is contained in $\GL(H_F)$ and it
acts regularly on $H^*$. Next, the automorphism group $\Aut(H)$ of the skew
field $H$ is a subgroup of $\GammaL(H_F)$. Finally, we write $\inner{H^*}$ for
the group of all inner automorphisms $\tilde{h} \colon H\to H \colon x\mapsto
h^{-1}x h$, $h\in H^*$, and so $\inner {H^*}$ is a subgroup of $\GL(H_F)$.
According to \cite[Thm.~1]{pian-87b} and \cite[Prop.~4.1 and
4.2]{pz90-coll}\footnote{We wish to note here that Prop.~4.3 of
\cite{pz90-coll} is not correct, since the group $\overbracket{K}$ from there
in general is not a subgroup of $\Aut(H)$.},
\begin{equation}\label{eq:semidir1}
    \Autleri = \translations \rtimes \Aut(H) = \GammaL(H_H).
\end{equation}
By symmetry of `left' and `right', \eqref{eq:semidir1} implies $\Autri =
\GL({}_H H) \rtimes \Aut(H)=\GammaL({}_H H)$, where $\GL({}_H H)$ is the group
of \emph{right translations}. Note that $\GammaL(H_H)=\GammaL({}_H H)$. From
this fact \eqref{eq:le=ri} follows once more and in an
algebraic way. By virtue of the Skolem-Noether
theorem \cite[Thm.~4.9]{jac-89}, the $F$-linear skew field automorphisms
of $H$ are precisely the inner automorphisms. We therefore obtain from \eqref{eq:semidir1} that
\begin{equation}\label{eq:semidir2}
    \Autleri\cap \GL(H_F) = \translations \rtimes \inner{H^*}.
\end{equation}
The subgroups of $\Autleri$ and $\Autleri\cap\GL(H_F)$ that stabilise $1\in H$
are the groups $\Aut(H)$ and $\inner{H^*}$, respectively.

\begin{rem}
The natural homomorphism $\GL(H_F)\to\PGL(H_F)$ sends the group from
\eqref{eq:semidir2} to the group of all $\lep$-preserving projective
collineations of $\bPH$. This collineation group can be written as the
\emph{direct product} of two (isomorphic) subgroups, namely the image of the
group of left translations $\GL(H_H)$ and the image of the group of right
translations $\GL({}_H H)$ under the natural homomorphism.
\end{rem}

\par
If $\alpha\colon H\to H$ is an antiautomorphism of the quaternion skew field
$H$, then $\alpha\in\GammaL(H_F)$ and $\alpha$ takes left (right) parallel
lines to right (left) parallel lines. In particular, the conjugation
$\quer\colon H\to H$ is an $F$-linear antiautomorphism of $H$. Therefore, the
set
\begin{equation}\label{eq:semilinswap}
    \bigl(\translations \rtimes \Aut(H)\bigr)\circ{\quer}
\end{equation}
comprises precisely those mappings in $\GammaL(H_F)$ that interchange the left
with the right Clifford parallelism. The analogous subset of $\GL(H_F)$ is
given by
\begin{equation*}\label{eq:linswap}
  \bigl(\translations \rtimes \inner{H^*}\bigr)\circ{\quer} .
\end{equation*}
Alternative proofs of the previous results can be found in
\cite[Sect.~4]{blunck+k+s+s-17z}.

\section{Automorphisms}

Throughout this section, we always assume $\parallel$ to be a Clifford-like
parallelism of $\dspH$ as described in Section~\ref{sect:basics}.
Our aim is to determine the group $\Autp$
of automorphisms of $\parallel$. In a first step we focus on the
transformations appearing in \eqref{eq:semidir1} and \eqref{eq:semilinswap}.

\begin{prop}\label{prop:par-preserv}
Let\/ $\parallel$ be a Clifford-like parallelism of\/ $\dspH$. Then the
following assertions hold.
\begin{enumerate}

\item\label{prop:par-preserv.a} An automorphism $\alpha\in\Aut(H)$
    preserves\/ $\parallel$ if, and only if, $\alpha(\cF)=\cF$.

\item\label{prop:par-preserv.b} An antiautomorphism $\alpha$ of the
    quaternion skew field $H$ preserves\/ $\parallel$ if, and only if,
    $\alpha(\cF)=\cA(H_F)\setminus\cF$.

\item\label{prop:par-preserv.c} For all $h\in H^*$, the inner automorphism
    $\tilde{h}$ preserves\/ $\parallel$.

\item\label{prop:par-preserv.d} For all $g\in H^*$, the left translation
    $\lambda_g$ preserves\/ $\parallel$.

\item\label{prop:par-preserv.e} If $\beta\in\GL(H_F)$ preserves $\lep$,
    then $\beta$ preserves also $\parallel$.
\end{enumerate}
\end{prop}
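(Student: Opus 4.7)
The plan is to handle the five claims in turn, using Theorem~\ref{thm:cliff-like} to describe every $\parallel$-class as either the left parallel class of some $L \in \cF$ or the right parallel class of some $L \in \cA(H_F) \setminus \cF$, and using Lemma~\ref{lem:invar} to reduce ``$\alpha$ preserves $\parallel$'' to ``$\alpha$ maps every $\parallel$-class to a $\parallel$-class''.

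For (a), I first note that $\alpha \in \Aut(H)$ fixes the point $F1$ and hence permutes $\cA(H_F)$; since $\alpha(gL) = \alpha(g)\alpha(L)$ and $\alpha(Lg) = \alpha(L)\alpha(g)$, it sends the left (resp.\ right) parallel class of $L$ onto the left (resp.\ right) parallel class of $\alpha(L)$. If $\alpha(\cF) = \cF$, then $\alpha$ also stabilises $\cA(H_F)\setminus\cF$, so it sends each $\parallel$-class onto a $\parallel$-class. Conversely, if $\alpha$ preserves $\parallel$ and $L \in \cF$, the image of the left class of $L$ is the left class of $\alpha(L)$ and must coincide with the $\parallel$-class of $\alpha(L)$; since the left and right parallel classes of any maximal subfield of $H$ differ, this forces $\alpha(L) \in \cF$, and the analogous argument for $L \in \cA(H_F) \setminus \cF$ yields $\alpha(\cF) = \cF$.

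Part (b) follows by the same scheme, with the antiautomorphism identity $\alpha(gL) = \alpha(L)\alpha(g)$ now causing $\alpha$ to exchange left and right classes, so that the case analysis forces $\alpha(\cF) = \cA(H_F) \setminus \cF$. Part (c) is immediate from (a), since $\tilde{h} \in \Aut(H)$ and $\tilde{h}(\cF) = \cF$ by the inner-invariance of $\cF$ asserted in Theorem~\ref{thm:cliff-like}. For (d), the identities $\lambda_g(hL) = (gh)L$ and $\lambda_g(Lh) = (gLg^{-1})(gh)$ show that $\lambda_g$ sends the left class of $L$ to itself and the right class of $L$ to the right class of $gLg^{-1}$; inner-invariance of $\cF$ then ensures that a $\parallel$-class of either type is mapped to a $\parallel$-class of the same type. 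Finally, (e) follows from \eqref{eq:semidir2}: any $\lep$-preserving $\beta \in \GL(H_F)$ factors as $\lambda_g \circ \tilde{h}$, and each factor preserves $\parallel$ by (d) and (c).

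The main delicate point is the converse direction of (a), which hinges on the auxiliary fact that no maximal subfield of $H$ has coinciding left and right parallel classes. A short argument via Skolem--Noether shows that no maximal subfield $L$ is normal in $H^*$ (the normaliser $N_{H^*}(L^*)$ satisfies $N_{H^*}(L^*)/L^* \hookrightarrow \Aut_F(L)$ by conjugation), and this translates into the required non-coincidence of the two classes.
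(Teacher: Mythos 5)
Your proof is correct and follows essentially the same route as the paper: the direct computation of how automorphisms, antiautomorphisms and left translations act on left and right parallel classes, combined with Theorem~\ref{thm:cliff-like}, the inner-invariance of $\cF$, and the factorisation $\beta=\lambda_g\circ\tilde h$ from \eqref{eq:semidir2} for part~(e). The one point where you go beyond the paper is that you explicitly justify the auxiliary fact $\leS(L)\neq\riS(L)$ needed for the converse directions of (a) and (b) --- the paper treats this as known (cf.\ the identity $\leS(M)\cap\riS(M)=\{M,M^\perp\}$ in Remark~\ref{rem:correl}) --- and your normaliser argument for it is sound.
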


\begin{proof}
\eqref{prop:par-preserv.a} We read off from $\alpha(1)=1$ that
$\alpha\bigl(\cA(H_F)\bigr)=\cA(H_F)$ and from \eqref{eq:semidir1} that
$\alpha\in\Aut(H)\subset\Autleri$. The assertion now is an immediate
consequence of Theorem \ref{thm:cliff-like}.
\par
\eqref{prop:par-preserv.b} The proof follows the lines of
\eqref{prop:par-preserv.a} taking into account that $\alpha$ interchanges the
left with the right parallelism.
\par
\eqref{prop:par-preserv.c} \cite[Thm.~4.10]{havl+p+p-19a} establishes
$\tilde{h}(\cF)=\cF$. Applying \eqref{prop:par-preserv.a} we get
$\tilde{h}\in\Autp$.
\par
\eqref{prop:par-preserv.d} Choose any $\parallel$-class, say $\cS(L)$ with
$L\in\cA(H_F)$. In order to verify that $\lambda_g\bigl(\cS(L)\bigr)$ is also a
$\parallel$-class, we first observe that \eqref{eq:semidir1} gives
$\lambda_g\in\translations\subset\Autleri$. Next, we distinguish two cases. If
$L\in\cF$, then, by Theorem \ref{thm:cliff-like}, $\cS(L)=\leS(L)$ and so
$\lambda_g\bigl(\cS(L)\bigr)=\lambda_g\bigl(\leS(L)\bigr) =\leS(g
L)=\leS(L)=\cS(L)$.
If $L\in\cA(H_F)\setminus\cF$, then, by Theorem \ref{thm:cliff-like}, $\cS(L)=\riS(L)$.
Furthermore, \eqref{prop:par-preserv.c} gives $g L
g^{-1}\in\cA(H_F)\setminus\cF$. By virtue of these results and
\eqref{eq:le=ri}, we obtain
$\lambda_g\bigl(\cS(L)\bigr)=\lambda_g\bigl(\riS(L)\bigr) =\riS(g
L)=\riS( g L g^{-1})=\cS(g L g^{-1})$.
\par
\eqref{prop:par-preserv.e} By \eqref{eq:semidir2}, there exist $g,h\in H^*$ such that
$\beta=\lambda_g\circ\tilde{h}$. We established already in
\eqref{prop:par-preserv.d} and \eqref{prop:par-preserv.c} that
$\lambda_g,\tilde{h}\in\Autp$, which entails $\beta\in\Autp$.
\end{proof}

We proceed with a lemma that, apart from the quaternion formalism, follows
easily from \cite[Thm.~1.10, Thm~1.11]{luen-80a}; those theorems are about
spreads, their kernels and their corresponding translation planes. We follow
instead the idea of proof used in \cite[Thm.~4.3]{blunck+k+s+s-17z}.

\begin{lem}\label{lem:preauto}
Let $L\in\cA(H_F)$ and $\alpha\in\GammaL(H_F)$ be given such that $\alpha(1)=1$
and such that $\alpha$ takes one of the two parallel classes $\leS(L)$,
$\riS(L)$ to one of the two parallel classes $\leS\bigl(\alpha(L)\bigr)$,
$\riS\bigl(\alpha(L)\bigr)$. Then
\begin{equation}\label{eq:preauto1234}
    \forall\,x\in H,\;z\in L \colon
    \left\{\renewcommand\arraystretch{1.05}
    \begin{array}{l}
    \alpha(xz)= \left\{
                      \begin{array}{l@{\mbox{~~~if~~~}}l}
                      \alpha(x)\alpha(z) &  \alpha\bigl(\leS(L)\bigr)=\leS\bigl(\alpha(L)\bigr);\\
                      \alpha(z)\alpha(x) &  \alpha\bigl(\leS(L)\bigr)=\riS\bigl(\alpha(L)\bigr);
                      \end{array}
               \right.\\
    \alpha(zx)= \left\{
                      \begin{array}{l@{\mbox{~~~if~~~}}l}
                      \alpha(x)\alpha(z) &  \alpha\bigl(\riS(L)\bigr)=\leS\bigl(\alpha(L)\bigr);\\
                      \alpha(z)\alpha(x) &  \alpha\bigl(\riS(L)\bigr)=\riS\bigl(\alpha(L)\bigr).
                      \end{array}
                \right.
    \end{array}
    \right.
\end{equation}
\end{lem}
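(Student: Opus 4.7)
The plan is to leverage the spread structure of $\leS(L)$ (resp.\ $\riS(L)$) to reconstruct the multiplicative behavior of $\alpha$ on each line of the spread. Since $\alpha(1)=1\in L$, the image $\alpha(L)$ belongs to $\cA(H_F)$, so both $\leS\bigl(\alpha(L)\bigr)$ and $\riS\bigl(\alpha(L)\bigr)$ are meaningful. I will treat in detail the representative sub-case $\alpha\bigl(\leS(L)\bigr)=\leS\bigl(\alpha(L)\bigr)$; the other three sub-cases follow the same pattern after swapping left with right translations.

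For each $x\in H^*$ the line $xL$ is the unique member of $\leS(L)$ through $x$, hence its image $\alpha(xL)$ is the unique member of $\leS\bigl(\alpha(L)\bigr)$ through $\alpha(x)$, which is $\alpha(x)\alpha(L)$. One may therefore define $f_x\colon L\to\alpha(L)$ by $\alpha(xz)=\alpha(x)f_x(z)$. A routine verification, using additivity of $\alpha$, its semilinearity with accompanying field automorphism $\alpha_0:=\alpha_{|F}$, and the centrality of $F$, shows that $f_x$ is $F$-semilinear with companion $\alpha_0$ and satisfies $f_x(1)=1$.

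The core of the argument is to show that $f_x$ does not depend on $x$. Given $x_1,x_2\in H^*$ with $x_1L\neq x_2L$ and $x_1+x_2\neq 0$, computing $\alpha\bigl((x_1+x_2)z\bigr)$ by additivity on the one hand and by the defining relation of $f_{x_1+x_2}$ on the other yields
$$\bigl(f_{x_1+x_2}(z)-f_{x_1}(z)\bigr)\alpha(x_1)=-\bigl(f_{x_1+x_2}(z)-f_{x_2}(z)\bigr)\alpha(x_2).$$
The two sides lie in $\alpha(x_1)\alpha(L)$ and $\alpha(x_2)\alpha(L)$, which are distinct and hence disjoint lines of the spread $\leS\bigl(\alpha(L)\bigr)$; both sides therefore vanish, giving $f_{x_1}=f_{x_1+x_2}=f_{x_2}$. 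For arbitrary $x_1,x_2\in H^*$, the spread $\leS(L)$ contains more than two lines, so one can pick $w\in H^*$ with $wL\notin\{x_1L,x_2L\}$ (this automatically ensures $w\neq-x_i$, since $-x_i\in x_iL$), and then $f_{x_1}=f_w=f_{x_2}$. Setting $x=1$ gives $f_1=\alpha_{|L}$, so $\alpha(xz)=\alpha(x)\alpha(z)$ for all $x\in H$ and $z\in L$ (with $x=0$ trivial).

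The sub-case $\alpha\bigl(\leS(L)\bigr)=\riS\bigl(\alpha(L)\bigr)$ proceeds identically using the ansatz $\alpha(xz)=f_x(z)\alpha(x)$ and yields $\alpha(xz)=\alpha(z)\alpha(x)$. When the hypothesis concerns $\riS(L)$ instead, one works with the image of $Lx$ in place of $xL$, with $\alpha(zx)$ in place of $\alpha(xz)$, and otherwise repeats the argument, obtaining the stated formulas for $\alpha(zx)$. The main obstacle is the $x$-independence of $f_x$, secured by the additivity-and-disjointness trick above; all else is a mechanical consequence of $F$-semilinearity and the fact that a spread partitions the point set.
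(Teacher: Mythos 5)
Your argument is correct. The paper's proof has the same skeleton: it regards $H$ as a $2$-dimensional right vector space over $L$ and over $\alpha(L)$, notes that the hypothesis means $\alpha$ carries the set of one-dimensional subspaces of $H_L$ onto that of $H_{\alpha(L)}$ and hence (being additive and bijective with $\alpha(0)=0$) is a collineation of the associated affine planes, and then simply invokes the Fundamental Theorem of Affine Geometry to conclude that $\alpha$ is semilinear from $H_L$ to $H_{\alpha(L)}$ with accompanying isomorphism $\alpha_{|L}$, using $\alpha(1)=1$. What you do differently is to prove that semilinearity from scratch: your maps $f_x$ and the additivity-plus-disjointness argument forcing $f_{x_1}=f_{x_2}$ are precisely the standard proof of the relevant special case of that theorem, so your version is longer but self-contained and more elementary, whereas the paper's is shorter at the cost of citing a black box. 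Two minor remarks: in your displayed identity the scalars $f_{\bullet}(z)-f_{\bullet}(z)$ should sit to the \emph{right} of $\alpha(x_1)$ and $\alpha(x_2)$, since the spread lines in question are the right $\alpha(L)$-multiples $\alpha(x_i)\alpha(L)$ --- this is clearly what you intend, as you immediately locate the two sides in $\alpha(x_1)\alpha(L)$ and $\alpha(x_2)\alpha(L)$; and the well-definedness of $f_x$ rests on $\alpha(xL)=\alpha(x)\alpha(L)$, which you correctly extract from the fact that a parallel class is a spread. The remaining three sub-cases are handled by the same left/right swaps in both proofs.
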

\begin{proof}
First, let us suppose that $\alpha$ takes the \emph{left} parallel class
$\leS(L)$ to the \emph{left} parallel class $\leS\bigl(\alpha(L)\bigr)$. We
consider $H$, on the one hand, as a
$2$-dimensional \emph{right} vector space $H_{L}$ and, on the other hand, as a $2$-dimensional \emph{right} vector space
$H_{\alpha(L)}$. By our assumption,
$\alpha$ takes $\leS(L)=\{g L \mid g\in H^*\}$ to
$\leS\bigl(\alpha(L)\bigr)=\{g'\alpha(L)\mid g'\in H^*\}$, \emph{i.e.}, the set
of one-dimensional subspaces of $H_{L }$ goes over to the set of
one-dimensional subspaces of $H_{\alpha(L)}$. Since $\alpha$ is additive, it is
a collineation of the affine plane on $H_{L }$ to the affine plane on
$H_{\alpha(L)}$. From $\alpha(0)=0$ and the Fundamental Theorem of Affine
Geometry, $\alpha$ is a semilinear transformation of $H_{L }$ to
$H_{\alpha(L)}$. Let $\phi_{L}\colon L \to \alpha(L)$ be its accompanying
isomorphism of fields. From $\alpha(1)=1$, we obtain
$\alpha(z)=\alpha(1z)=\alpha(1)\phi_{L}(z) =\phi_{L}(z)$ for all $z\in L$,
whence the $\phi_{L}$-semilinearity of $\alpha$ can be rewritten as
\begin{equation}\label{eq:preauto}
    \forall\,x\in H,\;z\in L \colon    \alpha(xz)=\alpha(x)\alpha(z) .
\end{equation}
\par
Next, suppose that $\alpha$ takes the \emph{left} parallel class $\leS(L)$ to
the \emph{right} parallel class $\riS\bigl(\alpha(L)\bigr)$. We proceed as
above except for $H_{\alpha(L)}$, which is replaced by the 2-dimensional
\emph{left} vector space $_{\alpha(L)}H$. In this way all products of
$\alpha$-images have to be rewritten in reverse order so that the equation in
\eqref{eq:preauto} changes to $\alpha(xz)=\alpha(z)\alpha(x)$.
\par
There remain the cases when $\alpha$ takes $\riS(L)$ to
$\leS\bigl(\alpha(L)\bigr)$ or $\riS\bigl(\alpha(L)\bigr)$. Accordingly, the
equation in \eqref{eq:preauto} takes the form $\alpha(zx)=\alpha(x)\alpha(z)$
or $\alpha(zx)=\alpha(z)\alpha(x)$.
\end{proof}

We now establish that any $\alpha\in\Autp$ fixing $1$ satisfies precisely one
of the two properties concerning $\alpha(\cF)$, as appearing in
Proposition~\ref{prop:par-preserv}~\eqref{prop:par-preserv.a} and
\eqref{prop:par-preserv.b}. Afterwards, we will be able to show that any such
$\alpha$ is actually an automorphism or antiautomorphism of the skew
field $H$.

\begin{prop}\label{prop:oneline}
Let $\alpha\in\Autp$ be such that $\alpha(1)=1$. If there exists a line
$L\in\cA(H_F)$ such that $\cS(L)$ and $\alpha\bigl(\cS(L)\bigr)$ are of the
same kind, that is, both are left or both are right parallel classes, then
$\alpha(\cF)=\cF$. Similarly, if there exists a line $L\in\cA(H_F)$ such that
$\cS(L)$ and $\alpha\bigl(\cS(L)\bigr)$ are of different kind, then
$\alpha(\cF)=\cA(H_F)\setminus\cF$.
\end{prop}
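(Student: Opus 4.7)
My plan is to combine the partial-multiplicativity identities delivered by Lemma~\ref{lem:preauto}, applied once to $L$ and once to an arbitrary $L' \in \cA(H_F)$, and to rule out the ``wrong'' case mixtures by invoking the structural fact that in the quaternion skew field $H$ two non-central elements commute if and only if they lie in a common maximal subfield; hence non-central elements from distinct maximal subfields of $H$ never commute.

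First I would observe that $\alpha(1)=1$ forces $\alpha$ to permute $\cA(H_F)$, and that $\alpha\bigl(\cS(M)\bigr)=\cS\bigl(\alpha(M)\bigr)$ equals $\leS\bigl(\alpha(M)\bigr)$ or $\riS\bigl(\alpha(M)\bigr)$ according as $\alpha(M)\in\cF$ or not, for every $M\in\cA(H_F)$. So proving the first assertion (``same kind'') amounts to showing $L'\in\cF \Leftrightarrow \alpha(L')\in\cF$ for every $L'\in\cA(H_F)$; the case $L'=L$ is the hypothesis. I would assume without loss of generality that $L,\alpha(L)\in\cF$ (the opposite subcase being analogous) and apply Lemma~\ref{lem:preauto} to obtain $\alpha(xz)=\alpha(x)\alpha(z)$ for all $x\in H$ and $z\in L$.

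Next I would fix $L'\in\cA(H_F)\setminus\{L\}$ and suppose, toward a contradiction, that the desired equivalence fails. Lemma~\ref{lem:preauto} applied to $L'$ then delivers one of two identities: $\alpha(xz')=\alpha(z')\alpha(x)$ (if $L'\in\cF$, $\alpha(L')\notin\cF$), or $\alpha(z'x)=\alpha(x)\alpha(z')$ (if $L'\notin\cF$, $\alpha(L')\in\cF$). Picking $z\in L\setminus F$ and $z'\in L'\setminus F$, substituting $x=z'$ into the $L$-identity and $x=z$ into the $L'$-identity and comparing, I expect the first subcase to force $\alpha(zz')=\alpha(z'z)$ (hence $zz'=z'z$ by injectivity of $\alpha$) and the second subcase to force $\alpha(z')\alpha(z)=\alpha(z)\alpha(z')$. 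The latter needs one further input already contained in the proof of Lemma~\ref{lem:preauto}: the restrictions $\alpha|_L$ and $\alpha|_{L'}$ are field isomorphisms onto $\alpha(L)\ne\alpha(L')$, so that $\alpha(z)$ and $\alpha(z')$ are non-central elements of distinct maximal subfields of $H$. Either subcase thus reaches the forbidden commutativity of non-central elements from distinct maximal subfields.

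The second assertion (``different kind'') should be handled symmetrically: Lemma~\ref{lem:preauto} applied to $L$ now yields an antiautomorphism-type identity of the shape $\alpha(xz)=\alpha(z)\alpha(x)$ or $\alpha(zx)=\alpha(x)\alpha(z)$, and the ``wrong'' subcases for $L'$ become exactly those in which $\cS(L')$ and $\alpha\bigl(\cS(L')\bigr)$ are of the same kind; the same commutativity contradiction then applies. The main obstacle is not conceptual but organizational, namely bookkeeping the four subcases per assertion without confusion, while always reducing to the same commutativity-of-non-central-elements mechanism.
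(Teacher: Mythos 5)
Your proof is correct and follows essentially the same route as the paper: apply Lemma~\ref{lem:preauto} to $L$ and to a hypothetical badly behaved $L'$, then derive a contradiction from the injectivity of $\alpha$ together with the fact that non-central elements of distinct maximal subfields of $H$ never commute. The only organizational difference is that you dispose of both ``bad'' subcases for $L'$ directly (the second one via the commuting images $\alpha(z),\alpha(z')$ in the distinct maximal subfields $\alpha(L)\neq\alpha(L')$), whereas the paper treats only the subcase $L'\in\cF$, $\alpha(L')\notin\cF$ and obtains the reverse inclusion by running the same argument on $\alpha^{-1}\in\Autp$.
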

\begin{proof}
First, let us suppose that $\cS(L)=\leS(L)$ and
$\alpha\bigl(\cS(L)\bigr)=\leS\bigl(\alpha(L)\bigr)$. This means that $L$ and
$\alpha(L)$ are in $\cF$. We proceed by showing $\alpha(\cF)\subseteq\cF$. If
this were not the case, then a line $L'\in\cF$ would exist such that
$\alpha(L')\in\cA(H_F)\setminus\cF$, that is,
$\cS\bigl(\alpha(L')\bigr)=\riS\bigl(\alpha(L')\bigr)$. Furthermore, there
would exist quaternions $e\in L\setminus F$, $e'\in L'\setminus F$ and we would
have $e'e\neq e e'$. By Lemma~\ref{lem:preauto}, applied to $L$ and also to
$L'$, we would finally obtain $\alpha(e'e)=\alpha(e')\alpha(e)=\alpha(e e')$,
which is absurd due to $\alpha$ being injective. The same kind of reasoning can
be applied to $\alpha^{-1}\in\Autp$, whence $\alpha^{-1}(\cF)\subseteq \cF$.
Summing up, we have shown $\alpha(\cF)=\cF$ in our first case.
\par
The case when $\cS(L)=\riS(L)$ and
$\alpha\bigl(\cS(L)\bigr)=\riS\bigl(\alpha(L)\bigr)$ can be treated in an
analogous way and leads us to
$\alpha\bigl(\cA(H_F)\setminus\cF\bigr)=\cA(H_F)\setminus\cF$. Clearly, this is
equivalent to $\alpha(\cF)=\cF$.
\par
Let us now suppose that $\cS(L)$ and $\alpha\bigl(\cS(L)\bigr)$ are of
different kind, that is, one of them is a left and the other one is a right
parallel class. Then, by making the appropriate changes in the reasoning above,
we obtain $\alpha(\cF)=\cA(H_F)\setminus\cF$.
\end{proof}

On the basis of our previous results, we now establish our two main
theorems.

\begin{thm}\label{thm:cl-aut}
Let\/ $\parallel$ be a Clifford-like parallelism of\/ $\dspH$. Then a
semilinear transformation $\beta\in\GammaL(H_F)$ preserves\/ $\parallel$ if,
and only if, it can be written in the form
\begin{equation}\label{eq:cl-aut}
    \beta = \lambda_{\beta(1)}\circ \alpha,
\end{equation}
where $\lambda_{\beta(1)}$ denotes the left translation of $H$ by $\beta(1)$
and $\alpha$ either is an automorphism of the quaternion skew field $H$
satisfying $\alpha(\cF)=\cF$ or an antiautomorphism of $H$ satisfying
$\alpha(\cF)=\cA(H_F)\setminus\cF$.
\end{thm}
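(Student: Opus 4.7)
For the easy direction, I would invoke Proposition~\ref{prop:par-preserv}: part~(\ref{prop:par-preserv.d}) gives $\lambda_{\beta(1)}\in\Autp$, while parts~(\ref{prop:par-preserv.a}) and~(\ref{prop:par-preserv.b}) give $\alpha\in\Autp$ whenever $\alpha$ is an automorphism or an antiautomorphism of $H$ with the stated condition on $\cF$. Their composition therefore lies in $\Autp$.

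For the converse, starting from $\beta\in\Autp$, the plan is to set $g:=\beta(1)\in H^*$ and to pass to $\alpha:=\lambda_{g^{-1}}\circ\beta$. By Proposition~\ref{prop:par-preserv}~(\ref{prop:par-preserv.d}), $\alpha\in\Autp$, and clearly $\alpha\in\GammaL(H_F)$ with $\alpha(1)=1$. It therefore suffices to show that $\alpha$ is either an automorphism of $H$ satisfying $\alpha(\cF)=\cF$ or an antiautomorphism of $H$ satisfying $\alpha(\cF)=\cA(H_F)\setminus\cF$; then $\beta=\lambda_g\circ\alpha$ is the desired decomposition. Proposition~\ref{prop:oneline} splits the analysis into exactly those two cases.

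In the first case, $\alpha(\cF)=\cF$, applying Lemma~\ref{lem:preauto} to every $L\in\cA(H_F)$ delivers two families of identities: $\alpha(xz)=\alpha(x)\alpha(z)$ for all $x\in H$, $z\in L\in\cF$, and $\alpha(zx)=\alpha(z)\alpha(x)$ for all $x\in H$, $z\in L\in\cA(H_F)\setminus\cF$. Together with $F$-semilinearity (whose companion field automorphism coincides with $\alpha|_F$ because $\alpha(1)=1$), these identities immediately yield $\alpha(ab)=\alpha(a)\alpha(b)$ whenever $b$ lies in some $L\in\cF$, $a$ lies in some $L\in\cA(H_F)\setminus\cF$, or one of $a,b$ lies in $F$.

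The main obstacle is the remaining ``mixed'' situation: $a,b\in H\setminus F$ with $F[a]\in\cF$ and $F[b]\in\cA(H_F)\setminus\cF$, where Lemma~\ref{lem:preauto} directly produces $\alpha(ba)=\alpha(b)\alpha(a)$ but not $\alpha(ab)=\alpha(a)\alpha(b)$. My plan is to exploit the sum $c:=a+b$: since $F[a]\neq F[b]$ forces $c\notin F$, the subfield $F[c]\in\cA(H_F)$ belongs to either $\cF$ or $\cA(H_F)\setminus\cF$. In the first sub-case, expanding the identity $\alpha(xc)=\alpha(x)\alpha(c)$ by additivity and subtracting the already-known $\alpha(xa)=\alpha(x)\alpha(a)$ should isolate $\alpha(xb)=\alpha(x)\alpha(b)$ for every $x\in H$, whence the choice $x:=a$ yields $\alpha(ab)=\alpha(a)\alpha(b)$. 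In the second sub-case, the symmetric treatment of $\alpha(cx)=\alpha(c)\alpha(x)$ yields $\alpha(ax)=\alpha(a)\alpha(x)$ and $x:=b$ closes the argument. This shows $\alpha\in\Aut(H)$ in Case~(i). Case~(ii), $\alpha(\cF)=\cA(H_F)\setminus\cF$, then follows by the mirror argument, with Lemma~\ref{lem:preauto} now supplying antimultiplicative formulas and the same $c=a+b$ device resolving the analogous mixed pairs, so that $\alpha$ becomes an antiautomorphism of $H$.
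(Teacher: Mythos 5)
Your proposal is correct, and while its skeleton matches the paper's --- the easy direction via Proposition~\ref{prop:par-preserv}, the reduction to $\alpha:=\lambda_{\beta(1)}^{-1}\circ\beta$ with $\alpha(1)=1$, and the dichotomy supplied by Proposition~\ref{prop:oneline} --- the decisive step of upgrading the one-sided identities of Lemma~\ref{lem:preauto} to full (anti)multiplicativity is carried out by a genuinely different argument. The paper works with just \emph{two} lines of the same type: starting from a line $L$ with, say, $\cS(L)=\leS(L)$, it invokes the Cartan--Brauer--Hua theorem to produce a distinct conjugate $L'=h^{-1}Lh$, picks $e'\in L'\setminus L$ so that $\{1,e'\}$ is a basis of the right vector space $H_L$, writes an arbitrary $y$ as $z_0+e'z_1$ with $z_0,z_1\in L$, and expands $\alpha(xy)$ using Lemma~\ref{lem:preauto} for $L$ and $L'$ only. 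You instead apply Lemma~\ref{lem:preauto} to \emph{every} line of the star, note that the only products not immediately covered are $ab$ with $F[a]\in\cF$ and $F[b]\in\cA(H_F)\setminus\cF$, and recover these by additivity via $c:=a+b$ (correctly observing that $c\notin F$ and splitting on whether $F[c]$ lies in $\cF$ or not); I checked both sub-cases and the cancellation works as claimed, in Case~(ii) as well. Your route avoids Cartan--Brauer--Hua altogether and is in that sense more elementary and self-contained, at the price of a longer case analysis; the paper's is more economical in the amount of parallel-class information it consumes. One point worth making explicit when writing this up: to invoke Lemma~\ref{lem:preauto} for a given $L$ you need $\alpha\bigl(\cS(L)\bigr)=\cS\bigl(\alpha(L)\bigr)$, which follows from Lemma~\ref{lem:invar}, and it is then the case hypothesis $\alpha(\cF)=\cF$ (resp.\ $\alpha(\cF)=\cA(H_F)\setminus\cF$) that selects the relevant alternative in \eqref{eq:preauto1234}.
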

\begin{proof}
If $\beta$ can be factorised as in \eqref{eq:cl-aut}, then $\beta\in\Autp$
follows from Proposition~\ref{prop:par-preserv}~\eqref{prop:par-preserv.a},
\eqref{prop:par-preserv.b}, and \eqref{prop:par-preserv.d}.
\par
In order to verify the converse, we define
$\alpha:=\lambda_{\beta(1)}^{-1}\circ\beta$. Then $\alpha(1)=1$ and
$\alpha\in\Autp$ by Proposition
\ref{prop:par-preserv}~\eqref{prop:par-preserv.d}. We now distinguish two
cases.
\par
\emph{Case}~(i). There exists a line
$L\in\cA(H_F)$ such that $\cS(L)$ and $\alpha\bigl(\cS(L)\bigr)$ are of the
same kind. We claim that under these circumstances $\alpha\in\Aut(H)$.
\par
First, we confine ourselves to the subcase $\cS(L)=\leS(L)$. By the theorem of
Cartan-Brauer-Hua \cite[(13.17)]{lam-01a}, there is an $h\in H^*$ such that
$L':=h^{-1}Lh\neq L$. From
Proposition~\ref{prop:par-preserv}~{\eqref{prop:par-preserv.c}},
$\cS(L')=\tilde{h}\bigl(\cS(L)\bigr)$ is a left parallel class and, from
Proposition~\ref{prop:oneline}, the same holds for $\alpha\bigl(\cS(L')\bigr)$.
There exists an $e'\in L'\setminus L$ and, consequently, the
elements $1,e'$ constitute a basis of $H_{L}$. Given arbitrary quaternions
$x,y$ we may write $y=z_0+e' z_1$ with $z_0,z_1\in L$. By virtue of
Lemma~\ref{lem:preauto}, we obtain the intermediate result
\begin{equation}\label{eq:z-e'}
    \forall\, x\in H,\; z\in L \colon
    \alpha(xz)=\alpha(x)\alpha(z),\;
    \alpha(x e')=\alpha(x)\alpha(e').
\end{equation}
Using repeatedly the additivity of $\alpha$ and \eqref{eq:z-e'} gives
\begin{equation}\label{eq:auto}
\begin{aligned}
    \alpha(xy)& =\alpha(x z_0)        + \alpha\bigl((xe')z_1\bigr)  =\alpha(x)\alpha(z_0) + \alpha(xe')\alpha(z_1)\\
              & =\alpha(x)\bigl(\alpha(z_0) + \alpha(e')\alpha(z_1)\bigr)  =\alpha(x)\bigl(\alpha(z_0) + \alpha(e' z_1)\bigr) =\alpha(x)\alpha(y).
\end{aligned}
\end{equation}
Thus $\alpha$ is an automorphism of $H$.
\par
The subcase $\cS(L)=\riS(L)$ can be treated in an analogous way. It suffices to
replace $H_L$ with ${}_L H$ and to revert the order of the factors in all
products appearing in \eqref{eq:z-e'} and \eqref{eq:auto}.
\par
\emph{Case}~(ii). There exists a line $L\in\cA(H_F)$ such that $\cS(L)$ and
$\alpha\bigl(\cS(L)\bigr)$ are of different kind. Then, by reordering certain
factors appearing in Case~(i) in the appropriate way, the mapping $\alpha$
turns out to be an antiautomorphism of $H$.
\par
Altogether, since there exists a line in $\cA(H_F)$, $\alpha$ is an
automorphism or an antiautomorphism of $H$. Accordingly, from
Proposition~\ref{prop:par-preserv}~\eqref{prop:par-preserv.a} or
\eqref{prop:par-preserv.b}, $\alpha(\cF)=\cF$ or
$\alpha(\cF)=\cA(H_F)\setminus\cF$.
\end{proof}

\begin{thm}\label{thm:cl-aut-lin}
Let\/ $\parallel$ be a Clifford-like parallelism of\/ $\dspH$. Then the group\/
${\Autp}\cap\GL(H_F)$ of linear transformations preserving\/ $\parallel$
coincides with the group\/ ${\Autle}\cap\GL(H_F)$ of linear transformations
preserving the left Clifford parallelism\/ $\lep$.
\end{thm}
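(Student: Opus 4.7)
The plan is to prove the two inclusions separately. The inclusion $\Autle\cap\GL(H_F)\subseteq\Autp\cap\GL(H_F)$ is immediate from Proposition~\ref{prop:par-preserv}~\eqref{prop:par-preserv.e}, which asserts that every linear preserver of $\lep$ automatically preserves $\parallel$. So the work lies entirely in the reverse inclusion.

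To handle that inclusion, I would take any $\beta\in\Autp\cap\GL(H_F)$ and apply Theorem~\ref{thm:cl-aut} to factor $\beta=\lambda_{\beta(1)}\circ\alpha$. Since both $\beta$ and $\lambda_{\beta(1)}$ are $F$-linear, $\alpha$ is $F$-linear as well, and Theorem~\ref{thm:cl-aut} offers two possibilities: either (i) $\alpha$ is an $F$-linear automorphism of $H$ with $\alpha(\cF)=\cF$, or (ii) $\alpha$ is an $F$-linear antiautomorphism of $H$ with $\alpha(\cF)=\cA(H_F)\setminus\cF$. In case~(i) the Skolem-Noether theorem gives $\alpha=\tilde{h}$ for some $h\in H^*$, and hence $\beta=\lambda_{\beta(1)}\circ\tilde{h}\in\translations\rtimes\inner{H^*}=\Autle\cap\GL(H_F)$ by~\eqref{eq:semidir2}.

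The main obstacle is to rule out case~(ii). My key observation would be that $\quer$ fixes every line in $\cA(H_F)$ setwise: indeed, each such $L$ is a maximal subfield of $H$ containing $F$, and the identity $\ol{x}=\tr(x)-x$, valid for all $x\in L$ by \eqref{eq:x1}, yields $\ol{L}=L$; consequently $\quer(\cF)=\cF$. Next, I would note that any $F$-linear antiautomorphism of $H$ can be written in the form $\quer\circ\alpha'$, where $\alpha':=\quer\circ\alpha$ is an $F$-linear automorphism of $H$; by Skolem-Noether, $\alpha'$ is inner, and by Theorem~\ref{thm:cliff-like} the group $\inner{H^*}$ preserves $\cF$. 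Combining these facts gives $\alpha(\cF)=\cF$. Setting this equal to the requirement $\alpha(\cF)=\cA(H_F)\setminus\cF$ from case~(ii) would force $\cF=\cA(H_F)\setminus\cF$, which is absurd since these two sets are disjoint and their union $\cA(H_F)$ is nonempty. Hence case~(ii) cannot occur for linear $\beta$, and the theorem follows.
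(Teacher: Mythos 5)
Your proposal is correct and follows essentially the same route as the paper: Proposition~\ref{prop:par-preserv}~\eqref{prop:par-preserv.e} for one inclusion, then the factorisation from Theorem~\ref{thm:cl-aut} with Skolem--Noether handling the automorphism case, and the antiautomorphism case excluded by writing $\alpha$ as conjugation composed with an inner automorphism and observing that both factors fix $\cF$ (the paper uses $\alpha\circ\quer$ where you use $\quer\circ\alpha$, an immaterial difference). Your explicit remark that $\cF=\cA(H_F)\setminus\cF$ is impossible because $\cA(H_F)\neq\emptyset$ is a small but welcome addition that the paper leaves implicit.
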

\begin{proof}
In view of Proposition~\ref{prop:par-preserv}~\eqref{prop:par-preserv.e} it
remains to show that any $\beta\in{\Autp}\cap\GL(H_F)$ is contained in
${\Autle}\cap\GL(H_F)$. From \eqref{eq:cl-aut}, we deduce
$\beta=\lambda_{\beta(1)}\circ\alpha$, where $\alpha\in\GL(H_F)$ is an
automorphism of $H$ such that $\alpha(\cF)=\cF$ or an antiautomorphism of $H$
such that $\alpha(\cF)=\cA(H_F)\setminus\cF$. There are two possibilities.
\par
\emph{Case}~(i). $\alpha$ is an automorphism. By the
Skolem-Noether theorem, $\alpha$ is inner. Consequently, \eqref{eq:le=ri} and
\eqref{eq:semidir2} give
$\beta\in{\Autleri}\cap\GL(H_F)$.
\par
\emph{Case}~(ii). $\alpha$ is an antiautomorphism. Again by Skolem-Noether, the
product $\alpha':=\alpha\circ\quer$ of the given $\alpha$ and the conjugation
is in $\inner{H^*}$. The conjugation fixes $1$ and sends any $x\in H$ to
$\ol{x}=\tr(x) -x\in F1+Fx$. Therefore, all lines of the star $\cA(H_F)$ remain
fixed under conjugation. The inner automorphism $\alpha'$ fixes $\cF$ as a set
\cite[Thm.~4.10]{havl+p+p-19a}. This gives $\alpha(\cF)=\cF$ and contradicts
$\alpha(\cF)=\cA(H_F)\setminus\cF$. So, the second case does not occur.
\end{proof}

Theorem~\ref{thm:cl-aut-lin} may be rephrased in the language of projective
geometry as follows: if a \emph{projective collineation} of $\bPH$ preserves a
\emph{single} Clifford-like parallelism $\parallel$ of $\dspH$, then \emph{all}
Clifford-like parallelisms of $\dspH$ (including $\lep$ and $\rip$) are
preserved. This means that a characterisation of the Clifford-parallelisms of
$\dspH$ by their common group of linear automorphisms \big(or by the
corresponding subgroup of the projective group $\PGL(H_F)$\big) is out of reach
whenever there exist Clifford-like parallelisms of $\dspH$ other than $\lep$
and $\rip$. (Cf.\ the beginning of Section~\ref{sect:exa}.) Indeed, by
\cite[Thm.~4.15]{havl+p+p-19a}, any Clifford-like parallelism of this kind is
not Clifford with respect to any projective double space structure on $\bPH$.

\begin{cor}\label{cor:skolem-noether-pasotti} Let $\alpha_1\in\Autp$ be a
fixed automorphism of $H$. Then the following assertions hold.
\begin{enumerate}
\item\label{cor:skolem-noether-pasotti.a} All automorphisms
$\alpha$ of the skew field $H$ satisfying
    $(\alpha_1)_{|F}=\alpha_{|F}$ are in the group\/ $\Autp$.
\item\label{cor:skolem-noether-pasotti.b} All antiautomorphisms
    $\alpha$ of the skew field $H$ satisfying
    $(\alpha_1)_{|F}=\alpha_{|F}$ are not in the group\/ $\Autp$.
\end{enumerate}
The whole statement remains true if the words ``automorphism'' and
``antiautomorphism'' are switched.
\end{cor}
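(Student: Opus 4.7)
The plan is to compute, for each combination of (anti)automorphism type, how $\alpha$ acts on the defining set $\cF$, and then invoke Theorem~\ref{thm:cl-aut} as the membership test for $\Autp$. Since every skew field automorphism or antiautomorphism of $H$ fixes $1$, Theorem~\ref{thm:cl-aut} says that such an $\alpha$ lies in $\Autp$ precisely when either $\alpha$ is an automorphism with $\alpha(\cF)=\cF$, or $\alpha$ is an antiautomorphism with $\alpha(\cF)=\cA(H_F)\setminus\cF$. The hypothesis $\alpha_1\in\Autp$ therefore yields $\alpha_1(\cF)=\cF$ in the first half of the corollary (where $\alpha_1$ is an automorphism) and $\alpha_1(\cF)=\cA(H_F)\setminus\cF$ in the switched version.

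The key reduction is the Skolem-Noether theorem: every $F$-linear automorphism of $H$ is inner. Given any $\alpha$ of the relevant type, I would form $\alpha\circ\alpha_1^{-1}$. When $\alpha$ and $\alpha_1$ are of the same kind, this composition is an $F$-linear automorphism of $H$, so Skolem-Noether produces $h\in H^*$ with $\alpha\circ\alpha_1^{-1}=\tilde{h}$, hence $\alpha=\tilde{h}\circ\alpha_1$. When $\alpha$ and $\alpha_1$ are of different kinds, $\alpha\circ\alpha_1^{-1}$ is an $F$-linear antiautomorphism; pre-composing with $\quer$ turns it into an $F$-linear automorphism, to which Skolem-Noether again applies, yielding $\alpha=\quer\circ\tilde{h}\circ\alpha_1$ for some $h\in H^*$. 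Evaluating either formula on $\cF$ is then routine once I recall the two facts already in the text: inner automorphisms leave $\cF$ invariant \cite[Thm.~4.10]{havl+p+p-19a}, and $\quer$ fixes every line of $\cA(H_F)$ pointwise, because $\ol{x}=\tr(x)-x\in F1+Fx$ (a point used in the proof of Theorem~\ref{thm:cl-aut-lin}).

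Putting the pieces together, in part~(a) the equation $\alpha=\tilde{h}\circ\alpha_1$ gives $\alpha(\cF)=\tilde{h}(\alpha_1(\cF))=\cF$, and Theorem~\ref{thm:cl-aut} places $\alpha$ in $\Autp$; in part~(b) the equation $\alpha=\quer\circ\tilde{h}\circ\alpha_1$ gives $\alpha(\cF)=\quer(\tilde{h}(\cF))=\cF$, but $\alpha$ is an antiautomorphism, for which Theorem~\ref{thm:cl-aut} requires $\alpha(\cF)=\cA(H_F)\setminus\cF$, ruling out $\alpha\in\Autp$. The switched statement is obtained by the identical type-chase: the same two formulas for $\alpha$ emerge, evaluating on $\cF$ now yields $\alpha(\cF)=\cA(H_F)\setminus\cF$ in both cases, and this flips the two conclusions so that antiautomorphisms sharing the restriction with $\alpha_1$ land in $\Autp$ while automorphisms do not. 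I do not foresee a serious obstacle: the whole argument is a careful type-chase of automorphism versus antiautomorphism, and the only nontrivial inputs (Skolem-Noether, invariance of $\cF$ under inner automorphisms, and triviality of $\quer$ on $\cA(H_F)$) are all available from earlier sections.
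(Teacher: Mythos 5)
Your proof is correct and follows essentially the same route as the paper: a Skolem--Noether reduction of $\alpha$ modulo $\alpha_1$ to an inner automorphism (composed with $\quer$ in the mixed case), combined with the invariance of $\cF$ under $\inner{H^*}$ and the fact that conjugation fixes every line of $\cA(H_F)$. The paper merely packages part~\eqref{cor:skolem-noether-pasotti.b} a little differently, applying part~\eqref{cor:skolem-noether-pasotti.a} to $\alpha\circ\quer$ and quoting $\quer\notin\Autp$ from the proof of Theorem~\ref{thm:cl-aut-lin}, which is exactly the computation on $\cF$ that you carry out explicitly.
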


\begin{proof}
\eqref{cor:skolem-noether-pasotti.a} By the Skolem-Noether theorem,
$\alpha^{-1}\circ\alpha_1$ is an inner automorphism of $H$. Thus, from
Proposition~\ref{prop:par-preserv}~\eqref{prop:par-preserv.c},
$\alpha^{-1}\circ\alpha_1\in\Autp$, which implies $\alpha\in\Autp$.
\par
\eqref{cor:skolem-noether-pasotti.b} The conjugation $\quer$ is $F$-linear. We
therefore can apply \eqref{cor:skolem-noether-pasotti.a} to $\alpha\circ\quer$
and in this way we obtain $\alpha\circ\quer\in \Autp$. The proof of
Theorem~\ref{thm:cl-aut-lin}, Case~(ii), gives $\quer\notin\Autp$. Hence
$\alpha\notin\Autp$ as well.
\end{proof}

Theorem~\ref{thm:cl-aut} and Corollary~\ref{cor:skolem-noether-pasotti} (with
$\alpha_1:=\id$) together entail that
\begin{equation*}
    \bigl\{\alpha\in\Autp\mid\alpha(1)=1\bigr\}\subset\Aut(H)\circ\bigl\{\id_H,\quer\bigr\}.
\end{equation*}
In particular, for all $h\in H^*$, the inner automorphism $\tilde{h}$ is in
$\Autp$, whereas the antiautomorphism $\tilde h\circ\quer$ of the skew field
$H$ does not belong to $\Autp$.
\par
Theorem~\ref{thm:cl-aut-lin} motivates to compare the automorphism
groups $\Autp$ and $\Autle$ with respect to inclusion. This leads to four
(mutually exclusive) possibilities as follows:
\begin{gather}
    \Autp = \Autle ,\label{eq:aut-equal}\\
    \Autp\subset\Autle , \label{eq:aut-proper}\\
    \Autp\supset\Autle , \label{eq:aut-not-ex}\\
    \Autp\not\subseteq\Autle\mbox{~a}\mbox{nd~}\Autp\not\supseteq\Autle .
    \label{eq:aut-open}
\end{gather}
In Section~\ref{sect:exa}, it will be shown, by giving illustrative examples,
that each of \eqref{eq:aut-equal} and \eqref{eq:aut-proper} is satisfied by
some Clifford-like parallelisms. The situation in \eqref{eq:aut-not-ex} does
not occur due to Corollary~\ref{cor:aut-not-ex} below. Whether or not there
exists a Clifford-like parallelism subject to \eqref{eq:aut-open} remains as an
open problem.

\begin{cor}\label{cor:aut-not-ex}
In\/ $\dspH$, there exists no Clifford-like parallelism\/ $\parallel$
satisfying\/ \eqref{eq:aut-not-ex}.
\end{cor}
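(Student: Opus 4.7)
The plan is to argue by contradiction: assuming $\Autle \subsetneq \Autp$, I would pick some $\beta \in \Autp \setminus \Autle$ and show that no such $\beta$ can exist. The first step is to apply Theorem~\ref{thm:cl-aut} to factor $\beta = \lambda_{\beta(1)} \circ \alpha$, where $\alpha(1) = 1$ and $\alpha$ is either an automorphism of $H$ with $\alpha(\cF) = \cF$ or an antiautomorphism of $H$ with $\alpha(\cF) = \cA(H_F) \setminus \cF$. Since $\lambda_{\beta(1)} \in \translations \subseteq \Autle$ and $\Aut(H) \subseteq \Autle$ by \eqref{eq:semidir1}, the case in which $\alpha$ is an automorphism would immediately yield $\beta \in \Autle$, contradicting the choice of $\beta$. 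Hence $\alpha$ must be an antiautomorphism satisfying $\alpha(\cF) = \cA(H_F) \setminus \cF$, and the whole task reduces to ruling this possibility out under the standing hypothesis $\Autle \subseteq \Autp$.

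To do so, I would mimic the trick from Case~(ii) of the proof of Theorem~\ref{thm:cl-aut-lin} and set $\gamma := \alpha \circ \quer$. As a composition of two antiautomorphisms of $H$, the map $\gamma$ belongs to $\Aut(H) \subseteq \Autle$, and the standing hypothesis $\Autle \subseteq \Autp$ places $\gamma$ in $\Autp$. Proposition~\ref{prop:par-preserv}~\eqref{prop:par-preserv.a} then forces $\gamma(\cF) = \cF$.

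The decisive observation is that conjugation fixes every line of $\cA(H_F)$ setwise: for any $L = F1 + Fe \in \cA(H_F)$ one has $\ol{e} = \tr(e) - e \in L$, so $\ol{L} = L$, whence $\quer$ maps $\cF$ onto itself. This forces $\cF = \gamma(\cF) = \alpha(\cF) = \cA(H_F) \setminus \cF$, which is absurd because $\cA(H_F)$ is non-empty. I expect the main obstacle to be spotting this final sleight of hand: one must recognise that the hypothesis $\Autle \subseteq \Autp$ is exactly what is needed to insert the automorphism $\alpha \circ \quer$ into $\Autp$, and then exploit the invariance of $\cA(H_F)$ under $\quer$—which had already played a crucial role in Case~(ii) of Theorem~\ref{thm:cl-aut-lin}—to collapse $\cF$ onto its complement.
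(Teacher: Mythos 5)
Your proof is correct and follows essentially the same route as the paper: both extract an antiautomorphism $\alpha$ from Theorem~\ref{thm:cl-aut}, compose it with the conjugation $\quer$ to obtain an automorphism that the hypothesis forces into $\Autp$, and derive a contradiction from the fact that $\quer$ fixes every line of $\cA(H_F)$. The only cosmetic difference is that the paper packages the final step as an appeal to Corollary~\ref{cor:skolem-noether-pasotti}~\eqref{cor:skolem-noether-pasotti.b}, whereas you unfold that argument explicitly via $\gamma(\cF)=\cF$ versus $\alpha(\cF)=\cA(H_F)\setminus\cF$.
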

\begin{proof}
If \eqref{eq:aut-not-ex} holds for some Clifford-like parallelism $\parallel$,
then, by Theorem~\ref{thm:cl-aut}, there exists an antiautomorphism $\alpha_1$
of $H$ such that $\alpha_1\in\Autp$.
Corollary~\ref{cor:skolem-noether-pasotti}~\eqref{cor:skolem-noether-pasotti.b}
shows $\alpha_1\circ\quer\in\Aut(H)\setminus\Autp$. But \eqref{eq:semidir1} and
\eqref{eq:aut-not-ex} force
$\alpha_1\circ\quer\in\Aut(H)\subset\Autle\subset\Autp$, an absurdity.
\end{proof}

\begin{rem}\label{rem:correl} For any Clifford-like parallelism
$\parallel$ of $\dspH$ there are also correlations that preserve $\parallel$.
We just give one
example. The orthogonality relation $\perp$ that stems from the non-degenerate
symmetric bilinear form \eqref{eq:<,>} determines a projective polarity of
$\bPH$ by sending any subspace $S$ of $H_F$ to its orthogonal space $S^\perp$.
Using \cite[Cor.~4.4]{havl+p+p-19a} or \cite[(2.6)]{kk-75} one obtains that $\leS(M)
\cap \riS(M) = \{ M, M^\perp \}$ for all lines $M\in\cLH$. So, for all
$M\in\cLH$, we have $ M\lep M^\perp$ and $M\rip M^\perp$, which implies
$M\parallel M^\perp$. In other words, the polarity $\perp$ fixes all parallel
classes of the parallelisms $\lep$, $\rip$ and $\parallel$. Consequently, each
of the parallelisms $\lep$, $\rip$ and $\parallel$ is preserved under the
action of $\perp$ on the line set $\cLH$.
\end{rem}

\section{Examples}\label{sect:exa}

We first turn to equation~\eqref{eq:aut-equal}, that is, ${\Autp}={\Autle}$. In
any projective double space $\dspH$, this equation has two trivial solutions,
namely ${\parallel}={\lep}$ and, by \eqref{eq:le=ri}, ${\parallel}={\rip}$.
According to \cite[Thm.~4.12]{havl+p+p-19a}, which relies on \cite{fein+s-76a},
a projective double space $\dspH$ admits no Clifford-like parallelisms other
than $\lep$ and $\rip$ precisely when $F$ is a formally real Pythagorean field
and $H$ is the ordinary quaternion skew field over $F$. (See also
\cite[Thm.~9.1]{blunck+k+s+s-17z}.) Thus, when looking for non-trivial
solutions of \eqref{eq:aut-equal}, we have to avoid this particular class of
quaternion skew fields.

\begin{exa}\label{exa:F-Aut-invar}
Let $H$ be any quaternion skew field of characteristic two. From
\cite[Ex.~4.13]{havl+p+p-19a}, there exists a Clifford-like parallelism
$\parallel$ of $\dspH$ such that $\cF$ comprises \emph{all} lines
$L\in\cA(H_F)$ that are---in an algebraic language---separable extensions of
$F$. The set $\cF$ is fixed under all automorphisms of $H$, since any
$L'\in\cA(H_F)\setminus \cF$ is an inseparable extension of $F$.
Equation~\eqref{eq:semidir1} and Theorem~\ref{thm:cl-aut} together give
${\Autle}\subseteq{\Autp}$. As \eqref{eq:aut-not-ex} cannot apply, we get
${\Autle}={\Autp}$. Each of the sets $\cF$ and $\cA(H_F)\setminus\cF$ is
non-empty; see, for example, \cite[pp.~103--104]{draxl-83} or
\cite[pp.~46--48]{tits+w-02a}. Hence $\parallel$ does not coincide with $\lep$
or $\rip$.
\end{exa}

\begin{exa}\label{exa:aut=inner}
Let $H$ be a quaternion skew field that admits only inner automorphisms. Then
all automorphisms and all antiautomorphisms of $H$ are in $\GL(H_F)$. By
Theorem~\ref{thm:cl-aut-lin}, ${\Autle}$ is the common automorphism group of
all Clifford-like parallelisms of $\dspH$.
\par
In particular, any quaternion skew field $H$ with centre $\bQ$ admits only
inner automorphisms by the Skolem-Noether theorem. Since $\bQ$ is not
Pythagorean, we may infer from \cite[Thm.~4.12]{havl+p+p-19a} that any
$\bigl(\bP(H_\bQ),{\lep},{\rip}\bigr)$ possesses Clifford-like parallelisms
other than $\lep$ and $\rip$. (See \cite[Ex.~4.14]{havl+p+p-19a} for detailed
examples.)
\end{exa}

In order to establish the existence of Clifford-like parallelisms $\parallel$
that satisfy \eqref{eq:aut-proper}, we shall consider certain quaternion skew
fields admitting an outer automorphism of order two. The idea to use this kind
of automorphism stems from the theory of \emph{involutions of the second kind}
\cite[\S2,~2.B.]{knus+m+r+t-98a}. Indeed, for each of the automorphisms
$\alpha$ from Examples~\ref{exa:root3}, \ref{exa:c2-sep}, \ref{exa:c2-sep-old},
\ref{exa:c2-insep} and \ref{exa:c2-insep-old} the product $\alpha\circ\quer$ is
such an involution. Also, we shall use the following auxiliary result.

\begin{lem}\label{lem:orbit}
Let $L$ be a maximal commutative subfield of $H$, let $\alpha\in \Aut(H)$, and
let $h\in H^*$. Furthermore, assume that $\alpha(L) = h^{-1} L h$.
\begin{enumerate}
\item\label{lem:orbit.a} If\/ $\Char H \neq 2$, then for each $q\in
    L\setminus\{0\}$ with\/ $\tr(q)=0$ there exists an element $c\in F^*$
    such that
    \begin{equation}\label{eq:c}
    c^2 = N\bigl(\alpha(q)\bigr) \, N(q)^{-1} . 
    \end{equation}
\item\label{lem:orbit.b} If\/ $\Char H = 2$ and $L$ is separable over $F$,
    then for each $q\in L$ with $\tr(q)=1$ there exists an element $d\in F$
    such that
    \begin{equation}\label{eq:d}
    d^2 +d = N\bigl(\alpha(q)\bigr) + N(q) .
\end{equation}
\item\label{lem:orbit.c} If\/ $\Char H = 2$ and $L$ is inseparable over
    $F$, then for each $q\in L\setminus F$ there exist elements $c\in F^*$,
    $d\in F$ such that
    \begin{equation}\label{eq:e}
    d^2= N\bigl(\alpha(q)\bigr)+c^2N(q).
    \end{equation}
\end{enumerate}
\end{lem}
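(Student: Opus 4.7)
The unifying idea is to move $\alpha(q)$ back into $L$ by conjugation. Since $\alpha(q)\in\alpha(L)=h^{-1}Lh$, the element $q'':=h\alpha(q)h^{-1}$ lies in $L$. Conjugation by $h$ is an inner automorphism, so by \eqref{eq:tr+N} applied to $\widetilde{h}$ (and to $\alpha$), we have $N(q'')=N(\alpha(q))$ and $\tr(q'')=\alpha(\tr(q))$. So in every case the norm of $\alpha(q)$ agrees with the norm of a concrete element of $L$, whose trace is under control.

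In case \eqref{lem:orbit.a}, $q\neq0$ with $\tr(q)=0$ forces $q\notin F$, whence $L=F+Fq$. Writing $q''=a+bq$ with $a,b\in F$ and taking trace yields $0=\tr(q'')=2a$, so $a=0$ and $q''=bq$ with $b\neq0$ (because $\alpha(q)\neq0$). Then $N(\alpha(q))=N(q'')=b^{2}N(q)$, and $c:=b$ does the job.

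In case \eqref{lem:orbit.b}, $\tr(q)=1$ again forces $q\notin F$, so $L=F+Fq$, and $q''=a+bq$ with $1=\tr(q'')=b\tr(q)=b$, i.e.\ $q''=a+q$. Since $\bar q = \tr(q)+q = 1+q$ in characteristic two, a direct expansion gives $N(q'')=(a+q)(a+1+q)=a^{2}+a+N(q)$, so $d:=a$ satisfies \eqref{eq:d}. In case \eqref{lem:orbit.c}, $L$ is a purely inseparable quadratic extension in characteristic two, so $\tr$ vanishes identically on $L$ and conjugation acts trivially on $L$. Writing $q''=a+bq$ with $a,b\in F$, we note $b\neq0$ (otherwise $q''\in F$ would force $\alpha(q)\in F$ and then $q\in F$, contradicting $q\in L\setminus F$); then $N(q'')=(a+bq)^{2}=a^{2}+b^{2}q^{2}=a^{2}+b^{2}N(q)$, so $c:=b$ and $d:=a$ verify \eqref{eq:e}.

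There is no deep obstacle here; the lemma is essentially the bookkeeping required to replace $\alpha(q)$ by its image under an inner conjugation and then use that $L=F+Fq$ together with the characteristic- and separability-dependent formulas for $\bar q$. The only points to watch are checking that the coefficient $b$ is nonzero in cases \eqref{lem:orbit.a} and \eqref{lem:orbit.c} (so that $c\in F^{*}$), and invoking the correct form of conjugation on $L$ in each of the three regimes.
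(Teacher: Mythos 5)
Your proof is correct and is essentially the paper's argument viewed from the other side: the paper compares $\alpha(q)$ with $h^{-1}qh$ inside $\alpha(L)$, while you conjugate $\alpha(q)$ back into $L$ and expand in the basis $\{1,q\}$; the trace and norm computations are identical. The nonvanishing checks on $b$ in cases \eqref{lem:orbit.a} and \eqref{lem:orbit.c} are handled correctly, so nothing is missing.
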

\begin{proof}
\eqref{lem:orbit.a} From \eqref{eq:tr+N}, applied first to $\alpha$ and then to
the inner automorphism $\tilde{h}$, we obtain
$\tr\bigl(\alpha(q)\bigr)=0=\tr(h^{-1}qh)$. The elements of $\alpha(L)$ with
trace zero constitute a one-dimensional $F$-subspace of $\alpha(L)$. Hence
there exists an element $c\in F^*$ with $\alpha(q)= c (h^{-1}qh)$. Application
of the norm function $N$ establishes \eqref{eq:c}.

\par
\eqref{lem:orbit.b} Like before, \eqref{eq:tr+N} implies
$\tr\bigl(\alpha(q)\bigr)=1=\tr(h^{-1}qh)$. The elements of $\alpha(L)$ with
trace $1$ constitute the set $\alpha(q)+F \subset \alpha(L)$. Hence there
exists an element $d\in F$ with $\alpha(q)+ d = h^{-1}qh$. Taking the norm on
both sides gives $N\bigl(d+\alpha(q)\bigr)=N(q)$. This equation can be
rewritten as in \eqref{eq:d}, which follows from $N\bigl(\alpha(q)+
d\bigr) = \bigl(\alpha(q)+ d\bigr)\bigl(\overline{\alpha(q)+ d}\bigr) =
\bigl(\alpha(q)+ d\bigr)\bigl(\alpha(q)+ d+1\bigr)$.

\par
\eqref{lem:orbit.c} Since both $L$ and $\alpha(L)$ are inseparable over $F$,
for any $x\in L\cup\alpha(L)$ it follows $\tr(x)=0$ and, by \eqref{eq:x1},
$N(x)=x^2$. Thus, in particular, $\tr\bigl(\alpha(q)\bigr)=0=\tr(h^{-1}qh)$.
Since $\alpha(q)$ belongs to $\alpha(L)$, which is a $2$-dimensional $F$-vector
space spanned by $h^{-1}qh$ and $1$, there exist $c,d\in F$ such that
$\alpha(q)=c(h^{-1}qh)+d$. Note that $c\neq0$ since $\alpha(q)\notin F$. Taking
the norm on both sides of the previous equation gives
$N\bigl(\alpha(q)\bigr)=N\bigl(c(h^{-1}qh)+d\bigr) =
\bigl(c(h^{-1}qh)+d\bigr)^2=c^2N(q)+d^2$, which entails \eqref{eq:e}.
\end{proof}

\begin{exa}\label{exa:root3}
Let $F=\bQ\bigl(\sqrt{3}\bigr)$ and denote by $H$ the ordinary quaternions over
$F$ with the usual $F$-basis $\{1,i,j,k\}$. The mapping $v + w \sqrt{3}\mapsto
v - w \sqrt{3}$, $v,w\in \bQ$, is an automorphism of $F$. It can be extended to
a unique $F$-semilinear transformation, say $\alpha\colon H\to H$, such that
$\{1,i,j,k\}$ is fixed elementwise. This $\alpha$ is an automorphism of the
skew field $H$, since all structure constants of $H$ with respect to the given
basis are in $\bQ$, and so all of them are fixed under $\alpha$.
\par
Following Lemma~\ref{lem:orbit}, we define $q:=i+\bigl(1+\sqrt{3}\bigr)j$ and
$L:=F1\oplus Fq$. Then $\tr(q)=q+\overline{q}=0$,
\begin{equation*}
    N(q) = 1 + \bigl(1+\sqrt{3}\bigr)^2 = 5+2\sqrt{3},
    \quad N\bigl(\alpha(q)\bigr)=\alpha\bigl(N(q)\bigr)=5-2\sqrt{3}
\end{equation*}
and $N\bigl(\alpha(q)\bigr)\,N(q)^{-1} =
\bigl(5-2\sqrt{3}\bigr)^2 /13 \neq c^2$ for all $c\in F^*$, since $13$ is not
a square in $F$. By Lemma~\ref{lem:orbit}~\eqref{lem:orbit.a}, there is no
$h\in H^*$ such that $\alpha(L)=h^{-1}Lh$.
\par
We now apply the construction from \cite[Thm.~4.10~(a)]{havl+p+p-19a} to the
set $\cD:=\{L\}$. This gives a Clifford-like parallelism $\parallel$ with the
property $\cF=\{h^{-1}L h\mid h\in H^*\}$. Under the action of the group of
inner automorphisms, $\inner{H^*}$, the star $\cA(H_F)$ splits into orbits of
the form $\{h^{-1}L'h\mid h\in H^*\}$ with $L'\in\cA(H_F)$. One such orbit is
$\cF$ and, due to $\alpha(L)\notin\cF$, another one is $\alpha(\cF)$. The
automorphism $\alpha$ interchanges these two distinct orbits, but it fixes the
$\inner{H^*}$-orbit of the line $F1\oplus Fi$. Therefore,
$\cA(H_F)\setminus\cF$ contains at least two distinct $\inner{H^*}$-orbits.
Consequently, there is no antiautomorphism of $H$ taking $\cF$ to
$\cA(H_F)\setminus\cF$. So, by Theorem~\ref{thm:cl-aut},
$\Autp\subseteq\Autle$. From \eqref{eq:semidir1}, Theorem~\ref{thm:cl-aut} and
$\alpha(L)\notin\cF$, follows $\alpha\in\Autle\setminus\Autp$. Summing up, we
have $\Autp\subset\Autle$, as required.
\end{exa}

\begin{exa}\label{exa:c2-sep}
Let $\bF_2$ be the Galois field with two elements, and let $F=\bF_2(t,u)$,
where $t$ and $u$ denote independent indeterminates over $\bF_2$.
\par
First, we collect some facts about the polynomial algebra $\bF_2[t,u]$ over
$\bF_2$. Let $\bN$ denote the set of non-negative integers.
The monomials of the form
\begin{equation}\label{eq:basis}
    t^\gamma u^\delta \mbox{~~with~~}(\gamma,\delta)\in\bN\times\bN
\end{equation}
constitute a basis of the $\bF_2$-vector space $\bF_2[t,u]$. Each non-zero
polynomial $p\in\bF_2[t,u]$ can be written in a unique way as a non-empty sum
of basis elements from \eqref{eq:basis}. Among the elements in
this sum there is a unique one, say $t^m u^n$, such that $(m,n)$ is maximal
w.r.t.\ the lexicographical order on $\bN\times \bN$. We shall refer to $(m,n)$
as the \emph{$t$-leading pair} of $p$. (In this
definition the indeterminates $t$ and $u$ play different roles, because of the
lexicographical order. Due to this lack of symmetry the degree of $p$ can be
strictly larger than $m+n$.) If $p_1,p_2\in\bF_2[t,u]$ are non-zero polynomials
with $t$-leading pairs $(m_1,n_1)$ and $(m_2,n_2)$, then $p_1p_2$ is
immediately seen to have the $t$-leading pair $(m_1+m_2,n_1+n_2)$.
\par
Next, we construct a quaternion algebra with centre $F$. We follow the
notation from \cite{blunck+p+p-10a} and
\cite[Rem.~3.1]{havl-16a}. Let $K:=F(i)$ be a separable quadratic extension of
$F$ with defining relation $i^2+i+1=0$. Furthermore, we define $b:=t+u$. The
quaternion algebra $(K/F,b)$ has a basis $\{1,i,j,k\}$ such that its
multiplication is given by the following table:
\begin{equation*}\label{eq:table}\small
    \begin{array}{c|ccc}
    \cdot &i  & j     & k \\
    \hline
    i     &1+i& k     & j+k \\
    j     &j+k& t+u   & (t+u)(1+i) \\
    k     &j  & (t+u)i& t+u
    \end{array}
\end{equation*}
The conjugation $\quer\colon H\to H$ sends $i\mapsto\ol i = i+1$ and fixes both
$j$ and $k$.
\par
In order to show that $(K/F,b)$ is a skew field we have to verify $b\notin
N(K)$. Assume to the contrary that there are polynomials $p_1$, $p_2\neq 0$,
$p_3$, and $p_4\neq 0$ in $\bF_2[t,u]$ such that
\begin{equation*}
\begin{aligned}
    N\big( p_1 / p_2 + (p_3 /p_4) i\big) &= \big( p_1 /p_2 + (p_3 /p_4) i\big)
    \big( p_1 / p_2 + (p_3 /p_4)(i+1)\big)\\
      &= p_1^2 / p_2^2 + (p_1 p_3) / (p_2 p_4) + p_3^2 / p_4^2\\
      &= t + u .
\end{aligned}
\end{equation*}
Consequently,
\begin{equation}\label{eq:cond}
      ( p_1  p_4 )^2 +  p_1 p_2 p_3 p_4  + ( p_2p_3 )^2
      + (t + u)(p_2p_4)^2 = 0.
\end{equation}
We cannot have $p_1=0$ or $p_3=0$, since then the left hand side of
\eqref{eq:cond} would reduce to a sum of two terms, with one being a square in
$\bF_2[t,u]$ and the other being a non-square. We define $(m_s,n_s)$ to be the
$t$-leading pair of $p_s$, $s\in\{1,2,3,4\}$. So, the $t$-leading pairs of the
first three summands on the left hand side of \eqref{eq:cond} are
\begin{equation*}\label{eq:t-leaders}
\begin{aligned}
    &\bigl(2(m_1+m_4),2(n_1+n_4)\bigr),\; (m_1+m_2+m_3+m_4,n_1+n_2+n_3+n_4),\;\\
    &\bigl(2(m_2+m_3),2(n_2+n_3)\bigr).
\end{aligned}
\end{equation*}
Let us expand each of the four summands on the left hand side of
\eqref{eq:cond} in terms of the monomial basis \eqref{eq:basis}. All monomials
in the fourth expansion have odd degree. There are three possibilities.
\par
\emph{Case}~(i). $m_1+m_4\neq m_2+m_3$. Then, for example, $m_1+m_4 >m_2+m_3$.
From
\begin{equation}\label{eq:greater}
    2(m_1+m_4)>m_1+m_2+m_3+m_4 > 2(m_2+m_3) ,
\end{equation}
the monomial $t^{2(m_1+m_4)}u^{2(n_1+n_4)}$ appears in the expansion of
$(p_1p_4)^2$, but not in the expansions of $p_1p_2p_3p_4$ and $(p_2p_3)^2$.
This monomial remains unused in the expansion of $(t + u)(p_2p_4)^2$, since
both of its exponents are even numbers. So, the left hand side of
\eqref{eq:cond} does not vanish, whence this case cannot occur.
\par
\emph{Case}~(ii). $m_1+m_4 = m_2+m_3$ and $n_1+n_4\neq n_2+n_3$. Then, for
example, $n_1+n_4>n_2+n_3$. Formula~\eqref{eq:greater} remains true when
replacing $m_s$ by $n_s$, $s\in\{1,2,3,4\}$. We now can deduce, as in Case~(i),
that the monomial $t^{2(m_1+m_4)}u^{2(n_1+n_4)}$ appears precisely once when
expanding each of the four summands the left hand side of \eqref{eq:cond} in
the monomial basis. So, this case is impossible.
\par
\emph{Case}~(iii). $m_1+m_4 = m_2+m_3$ and $n_1+n_4 = n_2+n_3$. Then
$t^{2(m_1+m_4)}u^{2(n_1+n_4)}$ appears precisely three times when
expanding the four summands on the left hand side of \eqref{eq:cond} and, due
to $1+1+1\neq 0$, this case cannot happen either.
\par
Since none of the Cases~(i)--(iii) applies, we end up with a contradiction.
\par
There is a unique automorphism of $F$ that interchanges the indeterminates $t$
and $u$. It can be extended to a unique $F$-semilinear transformation, say
$\alpha\colon H\to H$, such that $\{1,i,j,k\}$ is fixed elementwise. This
$\alpha$ is an automorphism of $H$, because $\alpha(t+u)=u+t=t+u$.
\par
Following Lemma~\ref{lem:orbit}, we define $q:=i+ u j$ and $L:=F1\oplus Fq$.
Then $\tr(q)=1$, $N(q)=1 +u^2(t+u)$, and $N\bigl(\alpha(q)\bigr)=1+t^2(u+t)$.
\par
We claim that
\begin{equation*}
    N\bigl(\alpha(q)\bigr) + N(q) = (u+t)^3 \neq d^2+d
    \mbox{~~for all~~}d\in F.
\end{equation*}
Let us assume, by way of contraction, that there are polynomials $d_1$ and
$d_2\neq 0$ in $\bF_2[t,u]$ satisfying $(u+t)^3 = d_1^2 / d_2^2 + d_1 /
d_2$.
Hence $d_1\neq 0$ and
\begin{equation}\label{eq:poly=0}
    (u+t)^3 d_2^2 + d_1^2 + d_1d_2 = 0 .
\end{equation}
We expand the first summand in \eqref{eq:poly=0} in terms of the monomial basis
\eqref{eq:basis}. This gives a sum of monomials all of which have odd degree.
Likewise, the expansion of the second summand in \eqref{eq:poly=0} results in a
sum of monomials all of which have even degree. Let us also expand the third
summand in \eqref{eq:poly=0} to a sum of monomials and let us then collect all
monomials with odd (resp.\ even) degree. In this way we get precisely the
monomials appearing in the first (resp.\ second) sum from above. Thus, with
$n_1:=\deg d_1$, $n_2:=\deg d_2$ we obtain that the degrees of the
summands in \eqref{eq:poly=0} satisfy the inequalities
\begin{equation*}
     3+2 n_2 \leq  n_1+n_2 ,\; 2 n_1\leq n_1+n_2.
\end{equation*}
These inequalities imply $3+2n_2\leq n_1+n_2\leq n_2+n_2$, which is absurd.
\par
By Lemma~\ref{lem:orbit}~\eqref{lem:orbit.b}, there is no $h\in H^*$ such that
$\alpha(L)=h^{-1}Lh$.
\par
We now repeat the reasoning from the end of
Example~\ref{exa:root3}. This shows that the Clifford-like parallelism
$\parallel$ that arises from $\cD:=\{L\}$ satisfies $\Autp\subset\Autle$.
\end{exa}

\begin{exa}\label{exa:c2-sep-old}
Let $H=(K/F,b)$, $\alpha\in\Aut(H)$ and $L$ be given as in
Example~\ref{exa:c2-sep}. We know from
Example~\ref{exa:F-Aut-invar} that
\begin{equation}\label{eq:E-insep}
    \cE_{\mathrm{insep}}:=\bigl\{L'\in\cA(H_F)\mid L'/F \mbox{~is~inseparable}\bigr\} \neq \emptyset.
\end{equation}
In contrast to Example~\ref{exa:c2-sep}, we adopt an alternative definition of
$\cD$, namely $\cD:=\{L\}\cup \cE_{\mathrm{insep}}$. The construction from
\cite[Thm.~4.10~(a)]{havl+p+p-19a} applied to this $\cD$ gives a Clifford-like
parallelism $\parallel$ with the property $\cF=\{h^{-1}L h\mid h\in
H^*\}\cup\cE_{\mathrm{insep}}$. The set $\cE_{\mathrm{insep}}$ remains fixed
under any antiautomorphism of $H$. Consequently, there is no antiautomorphism
of $H$ taking $\cF$ to $\cA(H_F)\setminus\cF$. So, by Theorem~\ref{thm:cl-aut},
$\Autp\subseteq\Autle$. From \eqref{eq:semidir1}, Theorem~\ref{thm:cl-aut} and
$\alpha(L)\notin\cF$, follows $\alpha\in\Autle\setminus\Autp$. Summing up, we
have $\Autp\subset\Autle$, as required.
\end{exa}

\begin{exa}\label{exa:c2-insep}
Consider the same quaternion skew field $H=(K/F,b)$ and the same automorphism
$\alpha\in\Aut(H)$ as in Example~\ref{exa:c2-sep}. However, now we define
$q:=j+uk$ and $L:=F1\oplus Fq$. Then $L$ is inseparable over $F$, $\tr(q)=0$,
$N(q)=(j+uk)^2=(u+t)(1+u+u^2)$ and $N\bigl(\alpha(q)\bigr)=(u+t)(1+t+t^2)$.
Equation~\eqref{eq:e} of Lemma~\ref{lem:orbit} is
\begin{equation*}
(u+t)(1+t+t^2)+c^2(u+t)(1+u+u^2)=d^2
\end{equation*}
which, upon fixing $c={c_1}/{c_2}$ and $d={d_1}/{d_2}$ with $c_1,c_2,d_1,d_2\in
\bF_2[t,u]$ and $c_1,c_2,d_2\neq 0$, is equivalent to
\begin{equation}\label{eq:insep1}
    d_2^2(u+t)(c_2^2+c_2^2t^2+c_1^2+c_1^2u^2)+d_2^2(u+t)(c_2^2t+uc_1^2)=d_1^2c_2^2.
\end{equation}
All the monomials in the first summand of the left
hand side of equation~\eqref{eq:insep1} are of odd degree, while the monomials
in the second one are of even degree. Since $d_1^2 c_2^2$ is a sum of monomials
of even degree this entails
\begin{equation}\label{eq:insep2}\renewcommand{\arraystretch}{1.2}
    \left\{\begin{array}{l}
        d_2^2(u+t)(c_2^2+c_2^2t^2+c_1^2+c_1^2u^2)=0, \\
        d_2^2(u+t)(c_2^2t+uc_1^2)=d_1^2c_2^2.
    \end{array}\right.
\end{equation}
The second equation in \eqref{eq:insep2} yields $ut(1+c)^2=(d+t+uc)^2$, and
since $c=1$ (\emph{i.e.}, $c_1=c_2$) is not a solution of the first equation in
\eqref{eq:insep2}, we can assume $1+c\neq0$, thus
$ut=\big((d+t+uc)/(1+c)\big)^2$.
This equation, after all, cannot be satisfied for any choice of $c\in F^*$
since $ut$ is not a square in $F$. Thus we can conclude by Lemma
\ref{lem:orbit}~\eqref{lem:orbit.c} that there exists no $h\in H^*$ such that
$\alpha(L)=h^{-1}qh$.
\par
The final step is to define a Clifford-like parallelism subject to
\eqref{eq:aut-proper}. This can be done as in Example~\ref{exa:c2-sep} using
$\cD:=\{L\}$.
\end{exa}

\begin{exa}\label{exa:c2-insep-old}
Let $H=(K/F,b)$, $\alpha\in\Aut(H)$ and $L$ be given as in
Example~\ref{exa:c2-insep}. Then a Clifford-like parallelism that satisfies
\eqref{eq:aut-proper} can be obtained along the lines of
Example~\ref{exa:c2-sep-old} by replacing everywhere the set
$\cE_{\mathrm{insep}}$ from \eqref{eq:E-insep} with $\cE_{\mathrm{sep}}:=
\bigl\{L'\in\cA(H_F)\mid L'/F \mbox{~is~separable}\bigr\}$.
\end{exa}


\begin{thebibliography}{32}

\bibitem{betta-16a}
A.~Betten, \emph{The packings of {$\mathrm{PG}(3,3)$}}, Des.\ Codes Cryptogr.
  \textbf{79} (2016), no.~3, 583--595.

\bibitem{bett+l-17a}
D.\ Betten and R.~L{\"o}wen, \emph{Compactness of the automorphism group of a
  topological parallelism on real projective $3$-space}, Results Math.
  \textbf{72} (2017), no.~1-2, 1021--1030.

\bibitem{bett+r-12a}
D.\ Betten and R.~Riesinger, \emph{Clifford parallelism: old and new
  definitions, and their use}, J.\ Geom. \textbf{103} (2012), no.~1, 31--73.

\bibitem{bett+r-14a}
\bysame, \emph{Collineation groups of topological parallelisms}, Adv.\ Geom.
  \textbf{14} (2014), no.~1, 175--189.

\bibitem{blunck+k+s+s-17z}
A.\ Blunck, N.\ Knarr, B.\ Stroppel, and M.~J. Stroppel, \emph{Clifford
  parallelisms defined by octonions}, Monatsh.\ Math. \textbf{187} (2018),
  no.~3, 437--458.

\bibitem{blunck+p+p-10a}
A.\ Blunck, S.\ Pasotti, and S.~Pianta, \emph{Generalized {C}lifford
  parallelisms}, Innov.\ Incidence Geom. \textbf{11} (2010), 197--212.

\bibitem{cogl-15a}
A.~Cogliati, \emph{Variations on a theme: {C}lifford's parallelism in elliptic
  space}, Arch.\ Hist.\ Exact Sci. \textbf{69} (2015), no.~4, 363--390.

\bibitem{draxl-83}
P.~K. Draxl, \emph{Skew fields}, London Mathematical Society Lecture Note
  Series, vol.~81, Cambridge University Press, Cambridge, 1983.

\bibitem{fein+s-76a}
B.~Fein and M.~Schacher, \emph{The ordinary quaternions over a {P}ythagorean
  field}, Proc. Amer. Math. Soc. \textbf{60} (1976), 16--18 (1977).

\bibitem{havl-15}
H.~Havlicek, \emph{A note on {C}lifford parallelisms in characteristic two},
  Publ. Math. Debrecen \textbf{86} (2015), no.~1-2, 119--134.

\bibitem{havl-16a}
\bysame, \emph{Clifford parallelisms and external planes to the {K}lein
  quadric}, J.\ Geom. \textbf{107} (2016), no.~2, 287--303.

\bibitem{havl+p+p-19a}
H.\ Havlicek, S.~Pasotti, and S.~Pianta, \emph{Clifford-like parallelisms}, J.\
  Geom. \textbf{110} (2019), no.~1, Art.\ 2, 18~pp.

\bibitem{havl+r-17a}
H.\ Havlicek and R.~Riesinger, \emph{Pencilled regular parallelisms}, Acta
  Math.\ Hungar. \textbf{153} (2017), no.~1, 249--264.

\bibitem{herz-77b}
A.~Herzer, \emph{Eine {K}lasse von {G}eometrien mit transitiver
  {T}ranslationsgruppe}, Beitr{\"a}ge zur geometrischen {A}lgebra ({P}roc.\
  {S}ympos., {D}uisburg, 1976) (H.~J.\ Arnold, W.\ Benz, and H.~Wefelscheid,
  eds.), Lehrb{\"u}cher u.\ Monographien aus dem Gebiete der Exakt.\ Wiss.,
  Math.\ Reihe, vol.~21, Birkh{\"a}user, Basel, 1977, pp.~147--152.

\bibitem{herz-77a}
\bysame, \emph{Halbprojektive {T}ranslationsgeometrien}, Mitt.\ Math.\ Sem.\
  Giessen \textbf{127} (1977), ii+136~pp.

\bibitem{herz-80a}
\bysame, \emph{On characterisations of kinematic spaces by parallelisms},
  Geometry and Differential Geometry ({P}roc.\ {C}onf., {U}niv.\ {H}aifa,
  {H}aifa, 1979), Lecture Notes in Math., vol. 792, Springer, Berlin, 1980,
  pp.~61--67.

\bibitem{jac-89}
N.~Jacobson, \emph{Basic algebra {II}}, Freeman, New York, 1989.

\bibitem{john-10a}
N.~L. Johnson, \emph{Combinatorics of spreads and parallelisms}, Pure and
  Applied Mathematics (Boca Raton), vol. 295, CRC Press, Boca Raton, 2010.

\bibitem{kk-75}
H.~Karzel and H.-J. Kroll, \emph{Eine inzidenzgeometrische {K}ennzeichnung
  projektiver kinematischer {R}\"aume}, Arch. Math. (Basel) \textbf{26} (1975),
  107--112.

\bibitem{karz+k-88}
\bysame, \emph{{G}eschichte der {G}eometrie seit {H}ilbert}, Wissenschaftliche
  Buchgesellschaft, Darmstadt, 1988.

\bibitem{kks-73}
H.~Karzel, H.-J. Kroll, and K.~S\"orensen, \emph{Invariante
  {G}ruppenpartitionen und {D}oppelr\"aume}, J. Reine Angew. Math.
  \textbf{262/263} (1973), 153--157.

\bibitem{kks-74}
\bysame, \emph{Projektive {D}oppelr\"aume}, Arch. Math. (Basel) \textbf{25}
  (1974), 206--209.

\bibitem{knus+m+r+t-98a}
M.-A.\ Knus, A.\ Merkurjev, M.\ Rost, and J.-P. Tignol, \emph{The book of
  involutions}, American Mathematical Society Colloquium Publications, vol.~44,
  American Mathematical Society, Providence, RI, 1998.

\bibitem{lam-01a}
T.~Y. Lam, \emph{A first course in noncommutative rings}, second ed., Graduate
  Texts in Mathematics, vol. 131, Springer, New York, 2001.

\bibitem{loew-17y}
R.~L{\"o}wen, \emph{Compactness of the automorphism group of a topological
  parallelism on real projective $3$-space: {T}he disconnected case}, Bull.
  Belg. Math. Soc. Simon Stevin \textbf{25} (2018), no.~4, 629--640.

\bibitem{loew-18z}
\bysame, \emph{Parallelisms of {PG$(3,\mathbb{R})$} admitting a $3$-dimensional
  group}, Beitr{\"a}ge Algebra Geom. (2018), online first.

\bibitem{loew-17z}
\bysame, \emph{A characterization of {C}lifford parallelism by automorphisms},
  Innov.\ Incidence Geom. \textbf{17} (2019), no.~1, 43--46.

\bibitem{luen-80a}
H.~L{\"u}neburg, \emph{Translation planes}, Springer, Berlin, 1980.

\bibitem{pian-87b}
S.~Pianta, \emph{On automorphisms for some fibered incidence groups}, J.\ Geom.
  \textbf{30} (1987), no.~2, 164--171.

\bibitem{pz90-coll}
S.~Pianta and E.~Zizioli, \emph{Collineations of geometric structures derived
  from quaternion algebras}, J. Geom. \textbf{37} (1990), no.~1-2, 142--152.

\bibitem{tits+w-02a}
J.~Tits and R.~M. Weiss, \emph{Moufang polygons}, Springer Monographs in
  Mathematics, Springer, Berlin, 2002.

\bibitem{topa+z-18z}
S.\ Topalova and S.~Zhelezova, \emph{Types of spreads and duality of the
  parallelisms of {PG$(3,5)$} with automorphisms of order $13$}, Des. Codes
  Cryptogr. (2018), online first.

\end{thebibliography}

\providecommand{\bysame}{\leavevmode\hbox to3em{\hrulefill}\thinspace}
\providecommand{\MR}{\relax\ifhmode\unskip\space\fi MR }
\providecommand{\MRhref}[2]{%
  \href{http://www.ams.org/mathscinet-getitem?mr=#1}{#2}
}
\providecommand{\href}[2]{#2}

\noindent
Hans Havlicek\\
Institut f\"{u}r Diskrete Mathematik und Geometrie\\
Technische Universit\"{a}t\\
Wiedner Hauptstra{\ss}e 8--10/104\\
A-1040 Wien\\
Austria\\
\texttt{havlicek@geometrie.tuwien.ac.at}
\par~\par
\noindent Stefano Pasotti\\
DICATAM-Sez. Matematica\\
Universit\`{a} degli Studi di Brescia\\
via Branze, 43\\
I-25123 Brescia\\
Italy\\
\texttt{stefano.pasotti@unibs.it}
\par~\par
\noindent
Silvia Pianta\\
Dipartimento di Matematica e Fisica\\
Universit\`{a} Cattolica del Sacro Cuore\\
via Trieste, 17\\
I-25121 Brescia\\
Italy\\
\texttt{silvia.pianta@unicatt.it}

\end{document}